\newcommand{\R}{\mathbb R}
\newcommand{\K}{\mathcal K}
\newtheorem{thm}{Theorem}[section]
\newtheorem{cor}{Corollary}[section]
\newtheorem{lemma}{Lemma}[section]
\newtheorem{df}{Definition}[section]
\newtheorem{proposition}{Proposition}[section]
\theoremstyle{remark}
\newtheorem*{rmk}{Remark}
\numberwithin{equation}{section}
\begin{document}
%%%%%%%%%%%%%%%%%%%%%%%%%%%%%%%%%%%%%%%%%%%%%5

\title{volume inequalities for the $i$-th-convolution bodies}

\author{David Alonso-Guti\'{e}rrez}
\email{alonsod@uji.es}
\address{Departament de Matem\`atiques, Universitat Jaume I, Campus de Riu Sec, E-12071, Castell\'o de la Plana, Spain.}

\author{Bernardo Gonz\'alez}
\email{bgmerino@um.es}
\address{Departamento de Matem\'aticas, Universidad de Murcia, Campus Espinardo, 30100 Murcia, Spain}

\author{Carlos Hugo Jim\'enez}
\email{carloshugo@us.es}
\address{ Departamento de An\'alisis Matem\'atico, Apartado de Correos 1160, Sevilla, 41080, Spain}

\subjclass[2010]{Primary 54A39, 54A40, Secondary 54A38}
\keywords{Mixed volumes, Convolution bodies}

\date{\today}
\begin{abstract}
 We obtain a new extension of Rogers-Sephard inequality providing an upper bound for the volume of the sum of two convex bodies $K$ and $L$. We also give lower bounds for the volume of the $k$-th limiting convolution body of two convex bodies $K$ and $L$. Special attention is paid to the $(n-1)$-th limiting convolution body, for which a sharp inequality, which is equality only when $K=-L$ is a simplex, is given. Since the $n$-th limiting convolution body of $K$ and $-K$ is the polar projection body of $K$, these inequalities can be viewed as an extension of Zhang's inequality.
\end{abstract}
\maketitle

\section{Introduction and notation}

Given $K\in\K^n_0$ an $n$-dimensional convex body ({\it i.e.} convex, compact subset of $\R^n$ with non-empty interior) and $\theta\in S^{n-1}$ a vector in the unit Euclidean sphere, we denote by $P_{\theta^\perp}(K)$ the projection of $K$ onto the hyperplane orthogonal to $\theta$. An important object in the study of hyperplane projections of a convex body is its polar projection body, since it gathers the information about the volume of all of its hyperplane projections. Namely, the polar projection body of $K$, which is denoted by $\Pi^*(K)$, is the centrally symmetric convex body which is the unit ball of the norm
$$
\Vert x\Vert_{\Pi^*(K)}=|x||P_{x^\perp}(K)|,
$$
where by $|\cdot|$ we denote, when no confusion is possible, indistincly the usual Lebesgue measure of a set and the Euclidean norm of a vector.

For any $T\in GL(n)$ we have that $\Pi^*(TK)=|\det T|^{-1}T\Pi^*(K)$ and then the quantity $|K|^{n-1}|\Pi^*(K)|$ is affine invariant. Perhaps the most important inequalities involving the polar projection body are Petty's projection \cite{P} and Zhang's inequality \cite{Z}.  On one hand, Petty's projection inequality states that the afforementioned affine invariant quantity is maximized when $K$ is an ellipsoid. Thus, denoting by $B_2^n$ the $n$-dimensional Euclidean ball,
\begin{equation}\label{PettyProjectionInequality}
|K|^{n-1}|\Pi^*(K)|\leq|B_2^n|^{n-1}|\Pi^*(B_2^n)|=\left(\frac{|B_2^n|}{|B_2^{n-1}|}\right)^n.
\end{equation}

On the other hand, Zhang proved a reverse form of (\ref{PettyProjectionInequality}), showing that this quantity is minimized when $K$ is a simplex. Thus, denoting by $\Delta^n$ the $n$-dimensional regular simplex,
\begin{equation}\label{ZhangInequality}
|K|^{n-1}|\Pi^*(K)|\geq|\Delta_n|^{n-1}|\Pi^*(\Delta_n)|=\frac{1}{n^n}\left(\begin{array}{c}2n\\n\end{array}\right).
\end{equation}

For any $K\in\K^n_0$, Steiner's formula says that the volume of $K+tB_2^n$ (where the sum is the Minkowski addition of two sets) can be expressed as a polynomial in $t$
$$
|K+tB_2^n|=\sum_{k=0}^n\binom{n}{k}
W_k(K)t^k.
$$
The coefficients $W_k(K)$ are called the querma\ss integrals of $K$ and, by Kubota's formula, they can be expressed
$$
W_{n-k}(K)=\frac{|B_2^{n}|}{|B_2^k|}\int_{G_{n,k}}|P_E(K)|d\nu_{n,k}(E),
$$
where $G_{n,k}$ denotes the Grassmannian manifold of the linear $k$-dimensional subspaces of $\R^n$, $d\nu_{n,k}$ is the unique Haar probability measure, invariant under orthogonal maps, on $G_{n,k}$ and $P_E$ denotes the orthogonal projection onto the subspace $E$. Notice that $W_0(K)=|K|$, $nW_1(K)=|\partial K|$ (the surface area of $K$) and $W_{n-1}(K)=|B_2^n|w(K)$, (the mean width of $K$). We refer the reader to \cite{SCH} for these and many other well-known facts in the Brunn-Minkowski theory.

In the same way as the volume of the $(n-1)$-dimensional projections of $K$ define a norm in $\R^n$, the querma\ss integrals of the $(n-1)$-dimensional projections also define a norm, whose unit ball is the $i$-th polar projection body. Namely, if $1\leq i\leq n-1$, $\Pi_i^*(K)$ is the unit ball of the norm given by
$$
\Vert x\Vert_{\Pi_i^*(K)}=|x|W_{n-i-1}(P_{x^{\perp}}(K))=\frac{1}{2}\int_{S^{n-1}}|\langle u,x\rangle|dS_i(K,u),
$$
where $dS_i(K,u)$ denotes the $i$-th surface area measure of $K$. Notice that the $(n-1)$-th polar projection body is exactly the polar projection body defined before, $\Pi^*(K)=\Pi_{n-1}^*(K)$. However, when $i\neq n-1$, it is no longer true that $|K|^{i}|\Pi_i^*(K)|$ is an affine invariant.

In \cite{L1}, \cite{L2} and \cite{L3}, the author studied the class of mixed projection bodies and gave sharp inequalities for them and their polars. Since the $i$-th polar projection bodies belong to this class, the following inequality which extends (\ref{PettyProjectionInequality}) was obtained:
\begin{equation}\label{iPetty}
|K|^{i}|\Pi_{i}^*(K)|\leq|B_2^n|^{i}|\Pi_{i}^*(B_2^n)|=\frac{|B_2^n|^{i+1}}{|B_2^{n-1}|^n},
\end{equation}
with equality if and only if $K=B_2^n$.

This inequality was strengthened in \cite{L3}. When $i=n-1$, Zhang's inequality gives a lower bound for the quantity $|K|^{i}|\Pi_i^*(K)|$. From the results in \cite{L3}, one can easily deduce (see Section \ref{LutwakLowerBound}) the following lower bound for any $i$
\begin{equation}\label{iZhang}
|K|^i|\Pi_{i}^*(K)|\geq\frac{1}{n^n}{2n\choose n}\frac{|K|^{i+1}}{W_{n-i-1}(K)^n}.
\end{equation}
However, there are no equality cases in this inequality unless $i=n-1$.

%In \cite{GHP}, some inequalities relating the value of the Querma\ss integrals of a the projections of a convex body onto a hyperplane and the querma\ss integrals of the convex body itself. Namely, it was proved that for any $1\leq i\leq n-2$ and any $\theta\in S^{n-1}$
%$$
%(1-\sqrt{1-d_i})\frac{W_i(K)}{W_{i-1}(K)}\leq\frac{W_i(P_{\theta^\perp}(K)}{W_{i-1}(P_{\theta^\perp}(K)}\leq (1+\sqrt{1-d_i})\frac{W_i(K)}{W_{i-1}(K)},
%$$
%where $d_i:=\frac{W_{i-1}(K)W_{i+1}(K)}{W_i^2(K)}\in (0,1]$ by Alexandrov-Fenchel inequality. In particular, we have
%$$
%\Pi_{n-2}^*(K)\supseteq\frac{|K|}{W_1(K)(1+\sqrt{1-d_1})}\Pi_{n-1}^*(K)
%$$
%and, by Zhang's inequality
%\begin{equation}\label{weakZhangPin-2}
%|K|^{n-2}|\Pi_{n-2}^*(K)|\geq \frac{1}{(1+\sqrt{1-d_1})^n}{2n \choose n}\frac{|K|^{n-1}}{|\partial(K)|^n}.
%\end{equation}
In \cite{AJV}, the authors studied the behavior of the $\theta$-convolution body of two convex bodies
$$
K+_{\theta}L=\{x\in K+L\,:\,|K\cap(x-L)|\geq\theta M(K,L)\},
$$
where $\displaystyle{M(K,L)=\max_{z\in\R^n}|K\cap(z-L)|}$. In particular, since $$\lim_{\theta\to 1^-}\frac{K+_\theta(-K)}{1-\theta^\frac{1}{n}}=n|K|\Pi^*(K)$$ (see \cite{S}), a new proof of Zhang's inequality (\ref{ZhangInequality}) was obtained and this inequality was extended to the limiting convolution body of two different convex bodies:
$$
\left|\lim_{\theta\to 1^-}\frac{K+_\theta L}{1-\theta^\frac{1}{n}}\right|\geq{2n\choose n}\frac{|K||L|}{M(K,L)}.
$$
The results in this paper also characterized the equality cases in Rogers-Sephard inequality \cite{RS}:
\begin{equation}\label{RogersSephard}
M(K,L)|K+L|\leq{2n\choose n}|K||L|.
\end{equation}

In \cite{TS}, the author considered a different class of convolution bodies of two convex bodies ($k$-th $\theta$-convolution bodies) and studied their limiting behavior when $\theta$ tends to 1. Changing slightly the definition in \cite{TS}, the $k$-th $\theta$-convolution body of $K$ and $L$ is:
$$
K+_{k,\theta}L:=\{x\in K+L\,:\,W_{n-k}(K\cap(x-L))\geq\theta M_{n-k}(K,L)\},
$$
where $\displaystyle{M_{n-k}(K,L)=\max_{x\in K+L}W_{n-k}(K\cap(x-L))}$. Notice that $K+_{n,\theta} L=K+_{\theta}L$.

In this paper we are going to follow the lines of \cite{AJV} and study some properties of this class of convolution bodies, all this in order to prove some volume inequalities for the limiting convolution body and $K+L$ that can be viewed as an extension of Zhang's inequality and Rogers-Sephard inequality for the volume of the difference body.

%For $1\leq k\leq n$, let $C_{k,\theta}(K)$ be the $k$-th $\theta$-convolution body of $K$ with its opposite:
%$$
%C_{k,\theta}(K):=\left\{x\in\R^n\,:\,W_{n-k}(K\cap (x+K))\geq \theta W_{n-k}(K)\right\}.
%$$
We give an upper bound for the volume of the sum of $K$ and $L$ and a lower bound for the volume of the limiting $k$-th convolution body of $K$ and $L$
$$
C_k(K,L):=\lim_{\theta\to 1^-}\frac{K+_{k,\theta}L}{1-\theta^\frac{1}{k}}.
$$
Special attention is paid to the case $k=n-1$, for which the inequalities we obtain are sharp and improve inequality (\ref{iZhang}):
\begin{thm}\label{Theorem}
Let $K,L\in\K^n_0$. Then
$$
|C_{n-1}(K,L)|\geq{2n \choose n}\frac{|K|W_{1}(L)+|L|W_{1}(K)}{2M_{1}(K,L)}\geq|K+L|
$$
with equality in each one of the inequalities if and only if $K=-L$ is a simplex.
\end{thm}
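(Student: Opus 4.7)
The plan is to sandwich a single integral between two geometric quantities. The function $f(x):=W_1(K\cap(x-L))^{1/(n-1)}$ is concave on $K+L$, and its super-level sets interpolate between $K+L$ at coarse scale and the limiting body $C_{n-1}(K,L)$ at fine scale near the maximum of $f$.

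First I would establish the identity
$$\int_{K+L}W_1(K\cap(x-L))\,dx = |K|W_1(L)+W_1(K)|L|.$$
For a.e.\ $x\in K+L$, $\partial(K\cap(x-L))$ decomposes up to an $\mathcal{H}^{n-1}$-null set as $((\partial K)\cap(x-L))\sqcup(K\cap(x-\partial L))$, so two applications of Fubini give $n\int W_1(K\cap(x-L))\,dx=|\partial K|\,|L|+|K|\,|\partial L|=n(|K|W_1(L)+W_1(K)|L|)$. Concavity of $f$ on $K+L$ follows from the Brunn--Minkowski inequality for querma\ss integrals applied to the inclusion $(1-\lambda)(K\cap(x_0-L))+\lambda(K\cap(x_1-L))\subseteq K\cap((1-\lambda)x_0+\lambda x_1-L)$ together with monotonicity of $W_1$; in particular $f$ attains its maximum $\tilde M:=M_1(K,L)^{1/(n-1)}$ at some $x_0\in K+L$.

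Next, set $D_s:=(\{f\geq(1-s)\tilde M\}-x_0)/s$ for $s\in(0,1]$. Concavity of $f$ on the segment from $x_0$ to any $x\in K+L$ gives $D_s\supseteq K+L-x_0$, while concavity on the segment from $x_0$ to $x_0+s_2 y$ gives $D_{s_2}\subseteq D_{s_1}$ whenever $s_1<s_2$; hence the nested family $D_s$ grows monotonically to $C_{n-1}(K,L)$ as $s\to 0^+$, and so $D_s\subseteq C_{n-1}(K,L)$ for every such $s$. Consequently
$$s^n|K+L|\;\leq\;|\{f\geq(1-s)\tilde M\}|\;\leq\; s^n|C_{n-1}(K,L)|.$$
Plugging these bounds into the layer-cake representation
$$\int_{K+L}f^{n-1}\,dx=(n-1)\tilde M^{n-1}\int_0^1(1-s)^{n-2}|\{f\geq(1-s)\tilde M\}|\,ds$$
and evaluating $(n-1)B(n-1,n+1)=2/\binom{2n}{n}$, together with the identity above, yields both inequalities of the theorem simultaneously.

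The hard part is the equality analysis. Equality in the lower level-set bound forces $f$ to be affine along every ray from $x_0$; equality in the upper bound forces $D_s=C_{n-1}(K,L)$ for all $s\in(0,1]$. Jointly they force the super-level sets of $f$ to be homothetic copies of $K+L-x_0$ and impose tightness in the Brunn--Minkowski step used for concavity, which in turn forces the sections $K\cap(x-L)$ to be homothets of one another. Following the rigidity arguments for Rogers--Shephard and for the limiting $\theta$-convolution body in~\cite{AJV}, this is equivalent to $K=-L$ being a simplex.
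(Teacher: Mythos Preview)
Your proposal is correct and follows essentially the same sandwich-and-integrate strategy as the paper. Both arguments rest on the nesting $(1-\theta^{1/(n-1)})(K+L)\subseteq K+_{n-1,\theta}L\subseteq (1-\theta^{1/(n-1)})C_{n-1}(K,L)$; your $D_s$ is exactly the paper's $\frac{K+_{n-1,\theta}L}{1-\theta^{1/(n-1)}}$ under the change of variable $s=1-\theta^{1/(n-1)}$, and your layer-cake integral is the paper's $\int_0^1|K+_{h,\theta}L|\,d\theta$ after that same substitution. The equality discussion via homothetic intersections and Soltan's characterization also matches the paper's Lemma~\ref{simplex}.

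The one genuine difference is how the identity $\int_{\R^n}W_1(K\cap(x-L))\,dx=|K|W_1(L)+|L|W_1(K)$ is established. The paper treats all $1\le k\le n$ uniformly via Crofton's intersection formula, rewriting the integral as an average of $|(K\cap E)+L|$ over affine $(n-k)$-planes $E$ meeting $K$, and then applying Brunn--Minkowski on sections; this produces only an inequality for $k<n-1$ but collapses to an identity when $k=n-1$ because the planes are lines and one-dimensional Brunn--Minkowski is trivially sharp. Your boundary-decomposition-plus-Fubini computation is more direct and avoids Crofton entirely, which is a clean simplification for the specific case $k=n-1$ at the cost of not extending to the general statement of Theorem~\ref{BoundVolumeLimiting}.
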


The left-hand side inequality improves inequality (\ref{iZhang}), when $L=-K$ and $k=i+1=n-1$ since, as we will see in Section \ref{LutwakLowerBound}, for any $1\leq k\leq n$ and any $K\subseteq\R^n$
\begin{equation}\label{convolutionandprojection}
C_k(K,-K)\subseteq nW_{n-k}(K)\Pi_{k-1}^*(K).
\end{equation}
%As we shall see, the limiting convolution body $C(K)$ is contained in $kL_k(K)$, where
%$$
%L_k(K):=\left\{x\in\R^n\,:\,-|x|\frac{d^+}{dt}\left.W_{n-k}\left(K\cap \left(t\frac{x}{|x|}+K\right)\right)\right|_{t=0}\leq W_{n-k}(K)\right\}.
%$$
%In \cite{B}, the author gave a reverse isoperimetric inequality, showing that for every convex body $K$, there exists a linear image $TK$ (which is the one such that $TK$ is in John's position {\it i.e.}, the Euclidean ball is the ellipsoid of maximal volume contained in $TK$) such that $\frac{|\partial TK|^n}{|TK|^{n-1}}\leq\frac{|\partial \Delta^n|^n}{|\Delta^n|^{n-1}}$. Consequently, we obtain the following
%\begin{cor}
%Let $K\subseteq\R^n$ be a convex body. Then, there exists a linear image $TK$ such that
%$$
%|TK|^{n-2}|L_{n-1}(TK)|\geq|\Delta^n|^{n-2}|L_{n-1}(\Delta^n)|.
%$$
%\end{cor}

The right hand-side inequality gives an upper bound for the volume of the sum of two convex bodies $K$ and $L$ of a different nature than Rogers-Shephard inequality. Excluding the case when $L=-K$ is a simplex, for which we know Rogers-Shephard inequality is sharp, the upper bound in Theorem \ref{Theorem} seems to give a better bound for the volume $|K+L|$ than (\ref{RogersSephard}). Indeed, it is easy to see the latter for $K$ and $L=-\lambda K$ with $\lambda>1$.

In \cite{R}, the author gave an upper bound for the volume of the sections of the difference body. Namely, he proved that for any $E\in G_{n,k}$
\begin{equation}\label{SectionsDifferenceBody}
|(K-K)\cap E|\leq C^k\varphi(n,k)^k\max_{x\in\R^n}|K\cap(x+E)|,
\end{equation}
where
$$
\varphi(n,k)=\min\left\{\frac{n}{k},\sqrt k\right\}.
$$

This estimate was used in \cite{R2} to give an upper bound of $M(K)M^*(K)$ for any convex body $K$ and consequently gave an upper bound for the Banach-Mazur distance between any two convex bodies (non-necessarily symmetric). In order to prove the $\frac{n}{k}$ upper bound the author proved some estimates than can be seen as volume inequalities for the $k$-th, $\theta$ convolution bodies of $K$ and $-K$. We will provide some volume estimates for the sections of the sum of two convex bodies that, as a particular case, will recover Rudelson's $\frac{n}{k}$ upper bound providing a simpler proof of it.

The paper is organized as follows: In Section \ref{ConvolutionBodies} we define the class of convolution bodies we will use and study some of their general properties. Since inequality (\ref{iZhang}) is not explicitly written in \cite{L3}, we show how it is deduced from the results there in Section \ref{LutwakLowerBound}. We also prove (\ref{convolutionandprojection}) to show that Theorem \ref{Theorem} is really an improvement of equation (\ref{iZhang}) when $k=i+1=n-1$. In Section \ref{Proof} we give a lower bound for the volume of $C_{k}(K,L)$ which in particular gives the proof of Theorem \ref{Theorem}. Finally in Section
\ref{SectionsProjDiff} we provide bounds for the volume of sections of the limiting convolution body $C_n(K,L)$ and the body $K+L$.

We denote by $\textrm{span}\{x_1,\dots, x_m\}$ the smallest linear subspace that contains the vectors $x_1,\dots,x_m$. The 1-dimensional linear subspace generated by a vector $x$ will be denoted by $\langle x\rangle$. The interior of a set $A$ will be denoted by $\textrm{int}(A)$. If $A$ is contained in an affine subspace, $\textrm{int}(A)$ refers to the relative interior of $A$ in such subspace.

\section{The $h,\theta$-convolution bodies.}\label{ConvolutionBodies}
\begin{df}\label{Definition}
Let $h:\K^n_0\rightarrow\R$ satisfying
\begin{itemize}
\item[$(i)$] If $K\subseteq L$ then $h(K)\leq h(L)$, for any $K,L\in\K^n_0$.
\item [$(ii)$]$h(a+K)=h(K)$, for any $a\in\R^n$ and $K\in\K^n_0$.
\item [$(iii)$]$h(\lambda K)=\lambda^kh(K)$ for any $0\leq \lambda\leq 1$, $K\in\K^n_0$ and some integer  $k$,
\item [$(iv)$] $h$ satisfies a Brunn-Minkowski type inequality $$h((1-\lambda)K+\lambda L)^\frac{1}{k}\geq \lambda h(K)^\frac{1}{k}+\lambda h(L)^\frac{1}{k}.$$
\end{itemize}

We define the $h,\theta$-convolution of $K$ and $L$ by
$$
K+_{h,\theta}L:=\{x\in K+L\ :\ h(K\cap (x-L))\geq \theta M_h(K,L))\},
$$
where $\displaystyle{M_h(K,L)=\max_{z\in K+L} h(K\cap(z-L))}$. For all of our results, we can assume without loss of generality that $M_h(K,L)=K\cap(-L)$.
\end{df}
\begin{rmk}
The querma\ss integrals $W_{n-k}(K)$ satisfy these hypotheses. In that case we have denoted $K+_{W_{n-k},\theta}L= K+_{k,\theta}L$.
\end{rmk}

The following proposition gives an inclusion relation between the $h,\theta$-convolution bodies.
\begin{proposition}\label{incconvexity}
    Let $K,L\in\K^n_0$. Then for every $\theta_1,\theta_2, \lambda_1,\lambda_2 \in[0,1]$ such that $\lambda_1+\lambda_2\leq 1$ we have
   $$
    \lambda_1 (K+_{h,\theta_1}L)+ \lambda_2(K+_{h,\theta_2}L)\subseteq K+_{h,\theta} L,
    $$
    where $1-\theta^{\frac{1}{k}}=\lambda_1(1-\theta_1^{\frac{1}{k}})+\lambda_2(1-\theta_2^{\frac{1}{k}}).$
    \end{proposition}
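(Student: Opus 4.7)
The plan is to pick a generic point in the left-hand side, namely $x=\lambda_1 x_1+\lambda_2 x_2$ with $x_i\in K+_{h,\theta_i}L$, and verify both that $x\in K+L$ and that $h(K\cap(x-L))\geq \theta M_h(K,L)$. For the first, I use the standing normalization that $M_h(K,L)=h(K\cap(-L))$, so in particular $0\in K+L$; then the convex combination $\lambda_1 x_1+\lambda_2 x_2+(1-\lambda_1-\lambda_2)\cdot 0$ lies in $K+L$ by convexity.

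The heart of the argument is the following Minkowski inclusion:
$$
\lambda_1\bigl(K\cap(x_1-L)\bigr)+\lambda_2\bigl(K\cap(x_2-L)\bigr)+(1-\lambda_1-\lambda_2)\bigl(K\cap(-L)\bigr)\subseteq K\cap(x-L).
$$
To check this, one takes $y_i\in K\cap(x_i-L)$ for $i=1,2$ and $y_0\in K\cap(-L)$ and writes $y:=\lambda_1 y_1+\lambda_2 y_2+(1-\lambda_1-\lambda_2)y_0$; convexity of $K$ gives $y\in K$, and the identity $x-y=\lambda_1(x_1-y_1)+\lambda_2(x_2-y_2)+(1-\lambda_1-\lambda_2)(-y_0)$ combined with convexity of $L$ gives $y\in x-L$.

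From here the proof is mechanical. Property $(i)$ and an iterated application of the Brunn--Minkowski type estimate $(iv)$ (legitimate since $\lambda_1+\lambda_2+(1-\lambda_1-\lambda_2)=1$) together yield
$$
h\bigl(K\cap(x-L)\bigr)^{1/k}\geq \lambda_1 h(K\cap(x_1-L))^{1/k}+\lambda_2 h(K\cap(x_2-L))^{1/k}+(1-\lambda_1-\lambda_2)h(K\cap(-L))^{1/k}.
$$
Plugging in $h(K\cap(x_i-L))\geq \theta_i M_h(K,L)$ and $h(K\cap(-L))=M_h(K,L)$, the right-hand side rearranges to
$$
M_h(K,L)^{1/k}\bigl[1-\lambda_1(1-\theta_1^{1/k})-\lambda_2(1-\theta_2^{1/k})\bigr]=\theta^{1/k}M_h(K,L)^{1/k},
$$
by the very definition of $\theta$. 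This exhibits $x\in K+_{h,\theta}L$ and closes the proof.

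The only genuine subtlety is setting up the three-term Minkowski inclusion with the correct weights so that they sum to $1$: this is precisely what forces the introduction of the ``slack'' set $K\cap(-L)$ with coefficient $1-\lambda_1-\lambda_2$, and is what allows the hypothesis $\lambda_1+\lambda_2\leq 1$ (rather than $=1$) to be handled. Everything else is a direct consequence of the abstract properties $(i)$--$(iv)$ of $h$, so the argument goes through verbatim for $h=W_{n-k}$, recovering the inclusion for $K+_{k,\theta}L$.
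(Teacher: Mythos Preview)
Your proof is correct and follows essentially the same approach as the paper: both establish the three-term Minkowski inclusion
\[
(1-\lambda_1-\lambda_2)\bigl(K\cap(-L)\bigr)+\lambda_1\bigl(K\cap(x_1-L)\bigr)+\lambda_2\bigl(K\cap(x_2-L)\bigr)\subseteq K\cap(x-L)
\]
and then apply monotonicity and the Brunn--Minkowski property of $h$ to conclude. If anything, your write-up is slightly more detailed, as you explicitly verify $x\in K+L$ and prove the inclusion pointwise, whereas the paper invokes it as a ``general inclusion'' for convex sets.
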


    \begin{proof}
    Let $x_1\in K+_{h,\theta_1}L$ and $x_2\in K+_{h,\theta_2}L$. From the general inclusion
$$
K\cap(\lambda_0A_0+\lambda_1A_1+\lambda_2A_2)
\supset
\lambda_0K\cap A_0+\lambda_1K\cap A_1+\lambda_2K\cap A_2
$$
where $K$ is convex and $\lambda_0+\lambda_1+\lambda_2=1$, and using the convexity of $K$ and $L$, we have
    $$
    K\cap (\lambda_1 x_1+\lambda_2 x_2-L)\supseteq(1-\lambda_1-\lambda_2)(K\cap(-L))+\lambda_1[K\cap(x_1-L)]+\lambda_2[K\cap (x_2-L)].
    $$
    By the properties of $h$ and the fact that $x_i\in K+_{h,\theta_i}L$ we have
    $$
    h(K\cap(\lambda_1x_1+\lambda_2x_2-L))\geq[1-\lambda_1(1- \theta_1^{\frac{1}{k}})-\lambda_2(1-\theta_2^{\frac{1}{k}})]^kM(K,L),
    $$
    which proves that $\lambda_1x_1+\lambda_2x_2\in K+_{h,\theta} L$ for $\theta=[1-\lambda_1(1- \theta_1^{\frac{1}{k}})-\lambda_2(1-\theta_2^{\frac{1}{k}})]^k$.
     \end{proof}

Taking $\theta_1=\theta_2$ and $\lambda_2=1-\lambda_1$ we have

\begin{cor}
Let $K,L\in\K^n_0$ and $\theta\in[0,1]$. Then $K+_{h,\theta} L$ is  convex.
\end{cor}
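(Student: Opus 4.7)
The plan is to obtain this corollary as an immediate specialization of Proposition \ref{incconvexity}, exactly as the hint preceding the statement suggests. Since the proposition is already proved, no new work on the underlying geometry is needed; the task is purely to choose parameters that reduce the inclusion to the definition of convexity.

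Concretely, I would start from two arbitrary points $x_1, x_2 \in K +_{h,\theta} L$ and a scalar $\lambda \in [0,1]$. Apply Proposition \ref{incconvexity} with $\theta_1 = \theta_2 = \theta$, $\lambda_1 = \lambda$, and $\lambda_2 = 1 - \lambda$. The hypothesis $\lambda_1 + \lambda_2 \leq 1$ holds (with equality). The value $\theta'$ produced by the proposition is determined by
$$1 - \theta'^{1/k} = \lambda(1 - \theta^{1/k}) + (1-\lambda)(1 - \theta^{1/k}) = 1 - \theta^{1/k},$$
so $\theta' = \theta$. Hence
$$\lambda\, x_1 + (1-\lambda)\, x_2 \in \lambda(K +_{h,\theta} L) + (1-\lambda)(K +_{h,\theta} L) \subseteq K +_{h,\theta} L,$$
which is exactly the definition of convexity of $K +_{h,\theta} L$.

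There is no real obstacle here; the only thing worth checking is the degenerate case $\theta = 0$, where $K +_{h,0} L$ is just $K + L$ (trivially convex), and the case $\theta = 1$, where the set is a single point (hence convex), so the argument covers the full range of $\theta \in [0,1]$. All the substantive work — the Brunn--Minkowski-type inequality for $h$ and the set-theoretic inclusion for intersections of a convex set with a Minkowski combination — has already been absorbed into Proposition \ref{incconvexity}, so the corollary follows in a single line.
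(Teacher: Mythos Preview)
Your argument is correct and matches the paper's own approach exactly: the corollary is stated immediately after the line ``Taking $\theta_1=\theta_2$ and $\lambda_2=1-\lambda_1$ we have,'' with no further proof given. One minor quibble: for $\theta=1$ the set $K+_{h,1}L$ need not be a single point in general, but since Proposition~\ref{incconvexity} already applies for all $\theta_1,\theta_2\in[0,1]$, your main argument covers this case without the aside.
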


\begin{cor}\label{increasingintheta}
Let $K,L\in\K^n_0$. Then, for every $0\leq\theta_0\leq \theta<1$ we have
$$
\frac{K+_{h,\theta_0} L}{1-{\theta_0}^\frac{1}{k}}
\subseteq
\frac{K+_{h,\theta} L}{1-\theta^\frac{1}{k}}.
$$
\end{cor}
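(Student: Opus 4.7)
The plan is to obtain the corollary as an immediate consequence of Proposition \ref{incconvexity} by choosing the free parameters $\theta_1,\theta_2,\lambda_1,\lambda_2$ so that the interpolation weights cancel out the scaling factor on the left-hand side. Concretely, given $0\leq\theta_0\leq\theta<1$, I would set
\[
\theta_1=\theta_0,\qquad \theta_2=1,\qquad \lambda_1=\frac{1-\theta^{1/k}}{1-\theta_0^{1/k}},\qquad \lambda_2=0.
\]
Note that $\lambda_1\in[0,1]$ because $\theta_0\leq\theta<1$ forces $1-\theta^{1/k}\leq 1-\theta_0^{1/k}$ and the denominator is positive, so the choice is admissible (if $\theta_0=\theta$ the inclusion is trivial, so we may assume strict inequality and $\lambda_1<1$).

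With these choices the relation defining $\theta$ in the proposition becomes
\[
1-\theta^{1/k}=\lambda_1(1-\theta_0^{1/k})+\lambda_2(1-1^{1/k})=\lambda_1(1-\theta_0^{1/k}),
\]
which is exactly $\theta$ itself. Proposition \ref{incconvexity} therefore yields
\[
\lambda_1\,(K+_{h,\theta_0}L)\subseteq K+_{h,\theta}L.
\]
Dividing both sides by $1-\theta^{1/k}$ gives the claimed inclusion.

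The only delicate point is the role of $\lambda_2=0$: the hypotheses of Proposition \ref{incconvexity} allow $\lambda_2=0$ as long as one interprets $0\cdot(K+_{h,\theta_2}L)=\{0\}$ in the Minkowski sense, and the point $0$ does belong to $K+_{h,1}L$ by our normalization $M_h(K,L)=h(K\cap(-L))$. Equivalently, one can repeat the core estimate from the proof of Proposition \ref{incconvexity} applied to a single $x\in K+_{h,\theta_0}L$: the inclusion
\[
K\cap(\lambda_1 x-L)\supseteq(1-\lambda_1)\bigl(K\cap(-L)\bigr)+\lambda_1\bigl(K\cap(x-L)\bigr)
\]
combined with properties $(iii)$ and $(iv)$ of $h$ shows
\[
h\bigl(K\cap(\lambda_1 x-L)\bigr)^{1/k}\geq \bigl(1-\lambda_1(1-\theta_0^{1/k})\bigr)M_h(K,L)^{1/k},
\]
and our choice of $\lambda_1$ makes the right-hand side equal to $\theta^{1/k}M_h(K,L)^{1/k}$. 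I expect no real obstacle here; the statement is essentially a repackaging of Proposition \ref{incconvexity}.
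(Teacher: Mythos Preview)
Your proof is correct and follows essentially the same route as the paper: both derive the corollary from Proposition~\ref{incconvexity} by arranging the parameters so that the total scaling factor equals $\frac{1-\theta^{1/k}}{1-\theta_0^{1/k}}$. The only cosmetic difference is that the paper sets $\theta_1=\theta_2=\theta_0$ and lets $\lambda_1+\lambda_2$ be that ratio, which sidesteps the boundary issue $\theta_2=1$ you flagged; your direct computation handles that issue anyway, so nothing is missing.
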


\begin{proof}
Taking $\theta_1=\theta_2=\theta_0$ in the above proposition, for any $\lambda_1,\lambda_2\in [0,1]$ such that $\lambda_1+\lambda_2\leq 1$
$$
(\lambda_1+\lambda_2)(K+_{h,\theta_0} L)
=
\lambda_1(K+_{h,\theta_0} L)+
\lambda_2(K+_{h,\theta_0} L)
\subseteq
 K+_{h,\theta} L,
$$
with $1-\theta^{\frac{1}{k}}=(\lambda_1+\lambda_2)(1-{\theta_0}^\frac{1}{k})$. Since $\lambda_1+\lambda_2=\displaystyle{\frac{1-\theta^\frac{1}{k}}{1-\theta_0^\frac{1}{k}}}$,
$$
\frac{1-\theta^\frac{1}{k}}{1-\theta_0^\frac{1}{k}}(K+_{h,\theta_0} L)
\subseteq
K+_{h,\theta} L
$$
whenever $\lambda_1+\lambda_2\leq 1$, which means $0\leq\theta_0\leq \theta\leq 1$.

\end{proof}

The next proposition shows that if the equality cases in ({\it iv}) of Definition \ref{Definition} occur $K$ and $L$ must be homothetic. Thus,
it is a necessary condition for $K=-L$ to be a simplex in order to attain equality in all inequalities in Corollary \ref{increasingintheta}. This is the case if $h(K)=W_{n-k}(K)$ ($k>n-1$).

\begin{lemma}\label{simplex}
Let $h$ be like in Definition \ref{Definition}, such that equality in ({\it iv}) occurs if and only if $K$ and $L$ are homothetic. Assume that for every $0\leq\theta_0\leq \theta<1$ we have
$$
\frac{K+_{h,\theta_0} L}{1-{\theta_0}^\frac{1}{k}}
=
\frac{K+_{h,\theta} L}{1-\theta^\frac{1}{k}}.
$$
Then $K=-L$ is a simplex.
\end{lemma}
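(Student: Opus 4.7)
The plan is to turn the equality hypothesis into a rigid structural identity for the sections $K\cap(x-L)$ and then extract the simplex conclusion from it.

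Taking $\theta_0=0$ in the assumption (and noting that $K+_{h,0}L=K+L$ by monotonicity of $h$), the hypothesis collapses to the single identity $K+_{h,\theta}L=(1-\theta^{1/k})(K+L)$ for every $\theta\in[0,1)$. Equivalently, the concave function $f(x):=h(K\cap(x-L))^{1/k}$ on $K+L$ takes the explicit form $f(x)=M_h^{1/k}(1-p(x))$, where $p$ is the Minkowski gauge of $K+L$ centered at the origin (by the standing convention $M_h(K,L)=K\cap(-L)$ we have placed the origin at a maximizer of $f$). In particular, $f$ is affine in the radial parameter along every ray through the origin.

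Next, I would extract equality in the Brunn-Minkowski step used in the proof of Proposition \ref{incconvexity}. For any $z\in\partial(K+L)$ and compatible $\theta_1,\theta_2,\lambda_1,\lambda_2$, the collinear points $x_i:=(1-\theta_i^{1/k})z$ lie in $\partial(K+_{h,\theta_i}L)$, and the explicit form of $f$ gives $f(\lambda_1 x_1+\lambda_2 x_2)=\theta^{1/k}M_h^{1/k}$. Replaying the proof of Proposition \ref{incconvexity} with this data, the chain of inequalities ending at $f(\lambda_1 x_1+\lambda_2 x_2)\geq\theta^{1/k}M_h^{1/k}$ is forced to be an equality, so the Brunn-Minkowski step $(iv)$ is an equality on the triple $K\cap(-L),\,K\cap(x_1-L),\,K\cap(x_2-L)$. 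By the rigidity assumed for $(iv)$, these three bodies are pairwise homothetic. Letting $z$ and the $\theta_i$ vary, we obtain for every $x\in K+L$
$$K\cap(x-L)=(1-p(x))\,M+t(x),\qquad M:=K\cap(-L),$$
where the scaling factor $1-p(x)$ is dictated by the form of $f$. Strict monotonicity of $h$ on bodies of non-empty interior then upgrades the Brunn-Minkowski equality to equality in the set-inclusion opening the proof of Proposition \ref{incconvexity}; substituting the previous identity and matching the $M$-coefficient and the translation in that set equality forces $t$ to be affine, say $t(x)=Tx$.

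It remains to pass from the identity $K\cap(x-L)=(1-p(x))M+Tx$ to the conclusion that $K=-L$ is a simplex. The linear map $T$ satisfies $T(K+L)\subseteq K$ and $(I-T)(K+L)\subseteq L$, and on $\partial(K+L)$ (where $p(x)=1$) the section collapses to the single point $Tx$, so every boundary point admits a unique decomposition $x=Tx+(x-Tx)$ as a sum of points in $K$ and $L$. This rigidity, combined with the gauge form of the scaling factor, pins down the face structure of $K+L$ as that of the difference body of a simplex and forces $K=-L$ to be a simplex. Alternatively, in the motivating case $h=|\cdot|$ one can use the identification $C_n(K,-K)=n|K|\Pi^*(K)$ and combine the rigidity with the equality cases of Zhang's inequality (\ref{ZhangInequality}) and Rogers-Shephard's inequality (\ref{RogersSephard}) applied to $K-K$. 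The main obstacle is precisely this final extraction --- translating the rigid section identity into both $K=-L$ and the simplex shape --- which I expect to require a careful analysis of the extreme-point and facet structure of $K+L$ forced by the combination of an affine translation $t$ and a gauge-valued scaling $1-p$.
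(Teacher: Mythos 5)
Your argument tracks the paper's proof closely up to its key rigidity step: from the hypothesis you correctly obtain $K+_{h,\theta}L=(1-\theta^{1/k})(K+L)$, place boundary points of the intermediate convolution bodies in convex position, force equality in the Brunn--Minkowski-type inequality $(iv)$, and conclude that $K\cap(x-L)$ is homothetic to $K\cap(-L)$ for every $x\in K+L$. This is exactly what the paper does. The genuine gap is the final step, which you yourself flag as ``the main obstacle'': passing from ``every section $K\cap(x-L)$ is homothetic to $K\cap(-L)$'' to ``$K=-L$ is a simplex.'' This implication is not something to be extracted by an ad hoc analysis of the map $T$ and the facet structure of $K+L$ --- it is precisely the content of Soltan's characterization of homothetic simplices (cited as \cite{S} in the paper): $K$ and $-L$ are homothetic simplices if and only if all the sections $K\cap(x-L)$, $x\in K+L$, are homothetic to $K\cap(-L)$. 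The paper invokes this theorem and then uses $K+_{h,1}L=\{0\}$ to upgrade ``homothetic'' to $K=-L$. Your reduction lands exactly on the hypothesis of Soltan's theorem, but you neither cite it nor prove it, so as written the proof is incomplete; the alternative you sketch via Zhang and Rogers--Shephard only makes sense for $h=|\cdot|$ and does not cover the general $h$ of the lemma.

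Two smaller remarks. First, the intermediate machinery you introduce (the affine translation part $t(x)=Tx$, the claimed equality in the set inclusion, and the appeal to ``strict monotonicity of $h$'') is both unnecessary once Soltan's theorem is available and not justified from Definition~\ref{Definition}, which only assumes monotonicity, not strict monotonicity. Second, be careful with the equality analysis in $(iv)$ at boundary points $y\in\partial(K+L)$, where $h(K\cap(y-L))=0$ and the section may be lower-dimensional; the paper glosses over this too, but a clean write-up should apply the rigidity of $(iv)$ only to pairs of sections with non-empty interior.
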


\begin{proof}
In particular, we have that for any $0\leq\theta<1$
$$
K+_{h,\theta} L
=
(1-{\theta}^\frac{1}{k})(K+L)
$$
and
$$
K+_{h,1}L=\{0\}.
$$
Thus, for any $x\in K+L$, $x\in \partial (K+_{h,\theta} L)$ for some $\theta$ and
$$
x=\theta^{\frac{1}{k}}0+(1-\theta^\frac{1}{k})y,
$$
with $y\in K+L$. Since $x\in\partial (K+_{h,\theta} L)$ we have $h(K\cap(x-L))=\theta M_{h}(K,L)$ and so, we have equality in
$$
h^\frac{1}{k}(K\cap(x-L))\geq h^\frac{1}{k}(\theta^\frac{1}{k}(K\cap (-L))+((1-\theta^\frac{1}{k})(K\cap(y-L)))\geq \theta^\frac{1}{k}M(K,L)^\frac{1}{k}.
$$
Thus, $K\cap(x-L)$, $K\cap(-L)$ and $K\cap(y-L)$ are homothetic. By Soltan's characterization of a simplex (\cite{S}), $K=-L$ is a simplex if and only if for every $x\in K+L$ $K\cap x-L$ is homothetic to $K\cap (-L)$. Thus, $K$ and $-L$ are homothetic simplices. Since $K+_{h,1}L=\{0\}$, $K=-L$.
\end{proof}

The following proposition gives an upper inclusion for the $h,\theta$-convolution bodies.
\begin{proposition}\label{UpperInclusion}
Let $K,L\in\K^n_0$ and $h$ like in Definition \ref{Definition} such that for any $v\in S^{n-1}$ $h(K\cap(tv-L))$ is differentiable in an interval $[0,\epsilon)$. Then, for any $\theta\in [0,1)$
$$
\frac{K+_{h,\theta} L}{1-\theta^\frac{1}{k}}\subseteq L_h(K,L),
$$
where
$$L_{h}(K,L):=\left\{x\in\R^n\,:\,-|x|\frac{d^+}{dt}\left.h\left(K\cap \left(t\frac{x}{|x|}-L\right)\right)\right|_{t=0}\leq kM_h(K,L)\right\}.$$
\end{proposition}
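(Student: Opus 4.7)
The plan is to pick a point $x$ in $K+_{h,\theta}L$, produce a linear-in-$t$ lower bound for $h(K\cap(tv-L))$ along the ray $v=x/|x|$, and then differentiate at $t=0$ to extract the condition defining $L_h(K,L)$.

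First I would use exactly the same general set inclusion that powered Proposition \ref{incconvexity}: writing $sx=(1-s)\cdot 0+s\cdot x$ and intersecting $K$ with $sx-L$,
$$K\cap(sx-L)\supseteq (1-s)\bigl(K\cap(-L)\bigr)+s\bigl(K\cap(x-L)\bigr)$$
for $s\in[0,1]$. Applying monotonicity (i) and the Brunn–Minkowski-type inequality (iv) of $h$, together with the normalization $h(K\cap(-L))=M_h(K,L)$ and the hypothesis $h(K\cap(x-L))\geq\theta M_h(K,L)$, this gives
$$h\bigl(K\cap(sx-L)\bigr)^{1/k}\geq M_h(K,L)^{1/k}\bigl(1-s(1-\theta^{1/k})\bigr).$$
Reparametrizing by $t=s|x|$, so that $sx=tv$ with $v=x/|x|$, and setting $g(t):=h(K\cap(tv-L))$, this reads
$$g(t)^{1/k}\geq M_h(K,L)^{1/k}\Bigl(1-\tfrac{t}{|x|}(1-\theta^{1/k})\Bigr),\qquad t\in[0,|x|],$$
with equality at $t=0$.

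Next I would subtract $g(0)^{1/k}=M_h(K,L)^{1/k}$, divide by $t>0$ and let $t\to 0^+$. Since $g$ is differentiable on $[0,\varepsilon)$ by hypothesis and $g(0)>0$, the function $g^{1/k}$ is also right-differentiable at $0$ with
$$\frac{d^+}{dt}g(t)^{1/k}\Big|_{t=0}=\frac{1}{k}M_h(K,L)^{1/k-1}\frac{d^+}{dt}g(t)\Big|_{t=0}.$$
Combining this with the inequality above yields
$$\frac{1}{k}M_h(K,L)^{1/k-1}\frac{d^+}{dt}g(t)\Big|_{t=0}\geq -\frac{1-\theta^{1/k}}{|x|}M_h(K,L)^{1/k}.$$

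Finally I would multiply by $-kM_h(K,L)^{1-1/k}>0$ and by $|x|/(1-\theta^{1/k})$, obtaining
$$-\frac{|x|}{1-\theta^{1/k}}\frac{d^+}{dt}h\Bigl(K\cap\bigl(t\tfrac{x}{|x|}-L\bigr)\Bigr)\Big|_{t=0}\leq kM_h(K,L),$$
which is precisely the statement that $x/(1-\theta^{1/k})\in L_h(K,L)$. The only subtle point — and the one that needs the differentiability assumption on $h$ — is justifying that $g^{1/k}$ has a right derivative at $0$ and that one can legitimately pass from the inequality $g(t)^{1/k}\geq\ldots$ to an inequality on right derivatives; everything else is an algebraic rearrangement of the Brunn–Minkowski-type bound (iv) along a one-parameter family.
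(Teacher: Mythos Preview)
Your proof is correct and follows essentially the same route as the paper: both use the Brunn--Minkowski concavity of $t\mapsto h(K\cap(tv-L))^{1/k}$ along the segment from $0$ to $x$, then extract the right derivative at $t=0$. The only cosmetic difference is that the paper compares a lower bound for $h(K\cap(\lambda x-L))$ with an upper bound coming from the fundamental theorem of calculus and lets $\lambda\to 0^+$, whereas you differentiate the inequality for $g^{1/k}$ directly via the chain rule; the substance is identical.
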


\begin{proof}
The concavity of the function $x\to h(K\cap(x-L))^\frac{1}{k}$ implies
\begin{eqnarray*}
h(K\cap(\lambda x-L))&\geq&\left((1-\lambda)M_h(K,L)^\frac{1}{k}+\lambda h(K\cap(x-L))^\frac{1}{k} \right)^k\cr
&=&M_h(K,L)\left[1+\lambda\left(\frac{h(K\cap(x-L))^\frac{1}{k}}{M_h(K,L)^\frac{1}{k}}-1 \right) \right]^k\cr
&\geq&M_h(K,L)\left[1+\lambda k\left(\frac{h(K\cap(x-L))^\frac{1}{k}}{M_h(K,L)^\frac{1}{k}}-1 \right) \right]
\end{eqnarray*}
for $\lambda\in [0,1]$ and $x\in K+L$. On the other hand,
\begin{eqnarray*}
h(K\cap (\lambda x-L))&=&M_h(K,L)+\int_0^{\lambda|x|}\frac{d^+}{dt}h\left(K\cap \left(t\frac{x}{|x|}-L\right)\right)dt\\
&\leq&M_h(K,L)+\lambda|x|\max_{t\in[0,\lambda|x|]}\frac{d^+}{dt}h\left(K\cap \left(t\frac{x}{|x|}-L\right)\right)
%\\
%&=&M(K,L)+\lambda|x|\frac{d^+}{dt}\left|K\cap \left(t\frac{x}{|x|}-L\right)\right|_{t=0}
\end{eqnarray*}
again using the concavity of $x\to h(K\cap(x-L))^\frac{1}{k}$. Comparing these two inequalities, and letting $\lambda\to0^+$, we obtain
$$
kM_h(K,L)\left(\frac{h(K\cap(x-L))^\frac{1}{k}}{M_h(K,L))^\frac{1}{k}}-1 \right)\leq|x|\frac{d^+}{dt}h\left.\left(K\cap \left(t\frac{x}{|x|}-L\right)\right)\right|_{t=0}.
$$
Since the lateral derivative is non positive, we get the desired inclusion.
\end{proof}

The following lemmas show that, when $K=-L$ is a simplex, all the inclusions above are identities. The first lemma shows that when $K=-L$ is a simplex, then the $h,\theta$-convolution of a linear image of the body is the linear image of the $h,\theta$ convolution.

\begin{lemma}
Let $K$ be a simplex. Then, for any $T\in GL(n)$
$$
TK+_{h,\theta}(-TK)=T(K+_{h,\theta}(-K)).
$$
\end{lemma}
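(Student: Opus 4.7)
The plan rests on Soltan's characterization of the simplex, already invoked in the proof of Lemma~\ref{simplex}: $K$ is a simplex if and only if for every $x\in K-K$ the set $K\cap(x+K)$ is homothetic to $K$. Since $L=-K$ in the statement, I would first rewrite $K\cap(x-L)=K\cap(x+K)$ and record, using Soltan, the existence of $\lambda_K(x)\in[0,1]$ and $v_K(x)\in\R^n$ with
$$
K\cap(x+K)=\lambda_K(x)K+v_K(x)\qquad(x\in K-K).
$$
The scalar $\lambda_K(x)$ and the translation $v_K(x)$ are determined by the simplex geometry of $K$ and depend on $x$ in an affine-equivariant way, which will be the crucial point.

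Next, I would apply $T\in GL(n)$ to the previous identity. Because linear images commute with intersections and translations, for any $y=Tx\in TK-TK$,
$$
TK\cap(y+TK)=T\bigl(K\cap(x+K)\bigr)=\lambda_K(x)\,TK+Tv_K(x),
$$
so the corresponding scalar for $TK$ at the point $y=Tx$ is precisely $\lambda_{TK}(Tx)=\lambda_K(x)$. Using property $(ii)$ (translation invariance) and property $(iii)$ (positive $k$-homogeneity) of $h$, this gives
$$
h\bigl(TK\cap(Tx+TK)\bigr)=\lambda_K(x)^{k}\,h(TK),\qquad h\bigl(K\cap(x+K)\bigr)=\lambda_K(x)^{k}\,h(K).
$$
In particular, since $\lambda_K(0)=1$ and $\lambda_K(x)\le 1$ by $(i)$, the maxima are attained at the origin: $M_h(K,-K)=h(K)$ and $M_h(TK,-TK)=h(TK)$.

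With these computations, the defining inequality for membership in the $h,\theta$-convolution body reduces to the same scalar inequality on both sides: $x\in K+_{h,\theta}(-K)$ if and only if $\lambda_K(x)^k\ge\theta$, and $y=Tx\in TK+_{h,\theta}(-TK)$ if and only if $\lambda_{TK}(Tx)^k=\lambda_K(x)^k\ge\theta$. Combined with the trivial identity $TK-TK=T(K-K)$, which shows that $y\in TK-TK$ iff $x\in K-K$, this yields
$$
TK+_{h,\theta}(-TK)=T\bigl(K+_{h,\theta}(-K)\bigr),
$$
as required. The only non-routine ingredient is Soltan's characterization, and it is needed precisely to guarantee that $K\cap(x+K)$ is a (translated) scaling of $K$, so that the non-affinely-invariant functional $h$ can still be computed via its homogeneity alone.
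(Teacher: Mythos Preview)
Your proof is correct and follows essentially the same route as the paper's: both invoke Soltan's characterization to write $K\cap(x+K)=\lambda(x)K+a(x)$, use homogeneity and translation invariance of $h$ to reduce membership in the $h,\theta$-convolution body to the scalar condition $\lambda(x)^k\ge\theta$, and then observe that $\lambda_{TK}(Tx)=\lambda_K(x)$ via the bijectivity of $T$. Your write-up is slightly more explicit about why the maximum $M_h$ is attained at the origin, but the argument is otherwise identical.
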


\begin{proof}
By Soltan's result \cite{S}, $K$ is a simplex if and only if for every $x\in K-K$ $K\cap x+K$ is homothetic to $K$. Thus, if $K$ is a simplex, for every $x\in K-K$
$$
K\cap(x+K)=a(x)+\lambda(x)K.
$$
Consequently
\begin{eqnarray*}
K+_{h,\theta}(-K)&=&\{x\in K-K\,:\,h(\lambda(x)K)\geq\theta h(K)\}\cr
&=&\{x\in K-K\,:\,\lambda(x)^k\geq\theta\}.
\end{eqnarray*}

For any $T\in GL(n)$ we have
\begin{eqnarray*}
TK+_{h,\theta}(-TK)&=&\{x\in TK-TK\,:\,h(TK\cap (x+TK))\geq\theta h(TK)\}\cr
&=&\{x\in T(K-K)\,:\,h(T(K\cap (T^{-1}x+K))\geq\theta h(TK)\}\cr
&=&\{x\in T(K-K)\,:\,h(T\lambda(T^{-1}x)K)\geq\theta h(TK)\}\cr
&=&\{x\in T(K-K)\,:\,\lambda(T^{-1}x)^k\geq\theta\}\cr
&=&T(K+_{h,\theta}(-K)).
\end{eqnarray*}
\end{proof}

\begin{lemma}
Let $K\subseteq\R^n$ be a simplex. Then, for any $\theta\in [0,1]$
$$
K+_{h,\theta}(-K)=(1-\theta^{\frac{1}{k}})(K-K).
$$
\end{lemma}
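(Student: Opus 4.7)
The plan is to combine the preceding lemma with a direct calculation on the standard simplex. From the proof of the preceding lemma, for any simplex $K$ and $x\in K-K$, Soltan's characterization gives the decomposition $K\cap(x+K)=a(x)+\lambda(x)K$ and
$$K+_{h,\theta}(-K)=\{x\in K-K:\lambda(x)^k\geq\theta\}.$$
The main claim is that $\lambda(x)=1-\Vert x\Vert_{K-K}$, where $\Vert\cdot\Vert_{K-K}$ is the Minkowski functional of the centrally symmetric body $K-K$; once this is established, the condition $\lambda(x)^k\geq\theta$ becomes $\Vert x\Vert_{K-K}\leq 1-\theta^{1/k}$, i.e.\ $x\in(1-\theta^{1/k})(K-K)$, which is the statement of the lemma.

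Both $\lambda(\cdot)$ and $\Vert\cdot\Vert_{K-K}$ transform covariantly under $GL(n)$: from $T(K\cap(T^{-1}x+K))=TK\cap(x+TK)$ and $TK-TK=T(K-K)$ we get $\lambda_{TK}(x)=\lambda_K(T^{-1}x)$ and $\Vert x\Vert_{TK-TK}=\Vert T^{-1}x\Vert_{K-K}$. Thus I may assume $K=\Delta=\{y\in\R^n:y_j\geq 0,\ \sum_j y_j\leq 1\}$ is the standard simplex, and a direct description of the intersection yields
$$\Delta\cap(x+\Delta)=\Bigl\{y\in\R^n:y_j\geq x_j^+\text{ for all }j,\ \sum_j y_j\leq\min\Bigl(1,\,1+\sum_j x_j\Bigr)\Bigr\},$$
which, after the substitution $y=x^++\mu(x)z$, is exactly $x^++\mu(x)\Delta$ with $\mu(x)=1-\max\bigl(\sum_j x_j^+,\sum_j x_j^-\bigr)$.

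Finally, $x\in\Delta-\Delta$ if and only if $\sum_j x_j^+\leq1$ and $\sum_j x_j^-\leq1$ (for $\Leftarrow$ take $a=x^+,\ b=x^-$; for $\Rightarrow$ note that $x_j^+\leq a_j$ and $x_j^-\leq b_j$ whenever $x=a-b$ with $a,b\in\Delta$), so $\Vert x\Vert_{\Delta-\Delta}=\max\bigl(\sum_j x_j^+,\sum_j x_j^-\bigr)=1-\mu(x)$. Comparing volumes in the two descriptions of $\Delta\cap(x+\Delta)$ forces $\lambda(x)=\mu(x)$, and the result follows. The only step requiring real care is the case split on the sign of $\sum_j x_j$ when deciding which of the two upper-sum constraints is active in the intersection; the rest is routine bookkeeping.
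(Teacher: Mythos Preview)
Your proof is correct and follows essentially the same route as the paper: reduce to the standard simplex via $GL(n)$-covariance, compute the homothety ratio $\lambda(x)$ explicitly, and use its positive $1$-homogeneity to identify the sublevel set $\{\lambda(x)^k\geq\theta\}$ with a dilate of $K-K$. Your formula $\mu(x)=1-\max\bigl(\sum_j x_j^+,\sum_j x_j^-\bigr)$ is the paper's $\lambda(x)=\tfrac{1}{2}\bigl(2-|\sum_i x_i|-\sum_i |x_i|\bigr)$ (which the paper quotes from \cite{AJV}) in disguise, and your identification $1-\lambda(x)=\Vert x\Vert_{K-K}$ is precisely what the paper uses implicitly when it rescales the inequality $|\sum_i x_i|+\sum_i |x_i|\leq 2(1-\theta^{1/k})$ to land in $(1-\theta^{1/k})(K-K)$.
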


\begin{proof}
The $\supseteq$ part of the identity is a consequence of Corollary \ref{increasingintheta}. By the previous lemma we can assume, without loss of generality, that $K=\textrm{conv}\{0,e_1,\dots,e_n\}$. Then, as it was shown in \cite{AJV},
$$
K\cap(x+K)=a(x)+\lambda(x)K,
$$
with
$$
\lambda(x)=\frac{1}{2}\left(2-\left|\sum_{i=1}^n x_i\right|-\sum_{i=1}^n|x_i|\right).
$$
Consequently,
\begin{eqnarray*}
K+_{h,\theta}(-K)&=&\left\{x\in K-K\,:\,\left|\sum_{i=1}^nx_i\right|+\sum_{i=1}^n|x_i|\leq 2(1-\theta^\frac{1}{k})\right\}\cr
&=&\left(1-\theta^\frac{1}{k}\right)\left\{x\in K-K\,:\,\left|\sum_{i=1}^nx_i\right|+\sum_{i=1}^n|x_i|\leq 2\right\}\cr
&=&\left(1-\theta^\frac{1}{k}\right)(K+_{h,0}(-K))\cr
&=&\left(1-\theta^\frac{1}{k}\right)(K-K).
\end{eqnarray*}
\end{proof}

\begin{lemma}
Let $K\subseteq\R^n$ be a simplex. Then, the set $L_h(K,-K)$ defined in Proposition \ref{UpperInclusion} is
$$
L_h(K,-K)=K-K.
$$
\end{lemma}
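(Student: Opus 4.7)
The strategy is to compute the right derivative of $t\mapsto h(K\cap(tv+K))$ at $t=0$ explicitly, and then substitute into the defining inequality of $L_h(K,-K)$. Since $K$ is a simplex, Soltan's characterization (already invoked in the previous two lemmas) gives $K\cap(x+K)=a(x)+\lambda(x)K$ for every $x\in K-K$, with $\lambda(x)\in[0,1]$ and $\lambda(0)=1$. Properties $(ii)$ and $(iii)$ of Definition \ref{Definition} then yield
$$
h(K\cap(x+K))=\lambda(x)^k h(K),
$$
so that $M_h(K,-K)=h(K)$ is attained at the origin.

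Next I would identify $\lambda$ explicitly. The identity above rewrites the convolution body as $K+_{h,\theta}(-K)=\{x\in K-K:\lambda(x)\geq\theta^{1/k}\}$, while the preceding lemma states $K+_{h,\theta}(-K)=(1-\theta^{1/k})(K-K)$. Comparing super-level sets as $\theta$ runs over $[0,1]$ forces
$$
\lambda(x)=1-\|x\|_{K-K},
$$
where $\|\cdot\|_{K-K}$ denotes the Minkowski functional of the centrally symmetric body $K-K$.

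Once $\lambda$ is pinned down, the rest is routine. For fixed $v\in S^{n-1}$ and $t\geq 0$ small, $\lambda(tv)=1-t\,\|v\|_{K-K}$, hence
$$
\left.\frac{d^+}{dt}\right|_{t=0}h(K\cap(tv+K))=k\lambda(0)^{k-1}\cdot(-\|v\|_{K-K})\cdot h(K)=-k\,h(K)\,\|v\|_{K-K}.
$$
Taking $v=x/|x|$ and substituting into the defining inequality $-|x|\frac{d^+}{dt}|_{t=0}h(K\cap(tv+K))\leq kM_h(K,-K)$, the condition collapses to $|x|\,\|v\|_{K-K}=\|x\|_{K-K}\leq 1$, i.e., $x\in K-K$.

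There is no serious obstacle. In fact, one inclusion is free: applying Proposition \ref{UpperInclusion} at $\theta=0$ together with the previous lemma gives $K-K=K+_{h,0}(-K)\subseteq L_h(K,-K)$, so only the reverse inclusion requires the derivative computation above. The single point that deserves a moment of care is the explicit identification $\lambda(x)=1-\|x\|_{K-K}$, but this drops out at once from the level-set description of $K+_{h,\theta}(-K)$ provided by the preceding lemma.
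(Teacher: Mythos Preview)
Your proposal is correct and follows the same underlying scheme as the paper: write $h(K\cap(tv+K))=\lambda(tv)^k h(K)$ via Soltan's characterization, differentiate at $t=0^+$, and substitute into the defining inequality of $L_h(K,-K)$. The only difference is presentational. The paper works in explicit coordinates, taking $K=\textrm{conv}\{0,e_1,\dots,e_n\}$ and using the formula $\lambda(x)=1-\tfrac12\bigl(|\sum_i x_i|+\sum_i|x_i|\bigr)$ already obtained in the proof of the preceding lemma, so that $L_h(K,-K)$ comes out as $\{x:|\sum_i x_i|+\sum_i|x_i|\leq 2\}=K-K$. You instead stay coordinate-free, reading off $\lambda(x)=1-\|x\|_{K-K}$ directly from the level-set identity $\{\lambda\geq\theta^{1/k}\}=(1-\theta^{1/k})(K-K)$ provided by the preceding lemma. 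Your route is a bit cleaner and avoids repeating the coordinate computation, but the two arguments are essentially the same.
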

\begin{proof}
We can assume, without loss of generality, that $K=\textrm{conv}\{0,e_1,\dots e_n\}$. Then for any $v\in S^{n-1}$
$$
h(K\cap(tv+K))=h(\lambda(tv)K)=\lambda^k(tv)h(K).
$$
with
$$
\lambda(tv)=1-\frac{|t|}{2}\left(\left|\sum_{i=1}^nv_i\right|+\sum_{i=1}^n|v_i|\right).
$$
Consequently
\begin{eqnarray*}
\left.\frac{d}{dt^+}h(K\cap(tv+K))\right|_{t=0}&=&\left.-kh(K)\lambda^{k-1}(tv)\frac{1}{2}\left(\left|\sum_{i=1}^nv_i\right|+\sum_{i=1}^n|v_i|\right)\right|_{t=0}\cr
&=&-kh(K)\frac{1}{2}\left(\left|\sum_{i=1}^nv_i\right|+\sum_{i=1}^n|v_i|\right).
\end{eqnarray*}
Thus
$$
L_h(K,-K)=\left\{x\in\R^n\,:\,\left|\sum_{i=1}^nx_i\right|+\sum_{i=1}^n|x_i|\leq 2\right\}=K-K.
$$
\end{proof}

\section{Lower bound for the volume of the $i-th$ polar projection body}\label{LutwakLowerBound}
In this section we are going to show how inequality (\ref{iZhang}) is deduced from the results in \cite{L3}, and the relation between this inequality and the inequality in Theorem \ref{Theorem}. In \cite{L3}, the author studied the volume of mixed bodies. A particular case of these bodies is the body $[K]_i$ defined by
$$
dS_{n-1}([K]_i,\theta)=dS_{n-i-1}(K,\theta).
$$
The following estimate for their volume was given:
$$
|[K]_i|^{n-1}\leq\frac{W_i(K)^n}{|K|},
$$
with equality if and only if $[K]_i$ and $K$ are homothetic. This reduces to the fact that $K$ is an $(n-i-1)$ tangential body of $B_2^n$ {\it i.e.}, a body such that every support hyperplane of $K$ that is not a support hyperplane of $B_2^n$ contains only $(n-i-2)$ singular points of $K$.

On the other hand, from the definition of $[K]_i$
$$
\Pi^*([K]_{n-i-1})=\Pi_{i}^*(K).
$$
Thus, using Zhang's inequality we obtain
$$
|K|^i|\Pi_i^*(K)|\geq\frac{|K|^i}{|[K]_{n-i-1}|^{n-1}}\frac{1}{n^n}{2n\choose n}\geq\frac{1}{n^n}{2n\choose n}\frac{|K|^{i+1}}{W_{n-i-1}(K)^n}.
$$
There is equality in the above inequalities if and only if $K$ is an $i$-tangential body of a ball and $[K]_{n-i-1}$, which has to be homothetic to $K$, is a simplex. Since the simplex is a $p$-tangential body of $B_2^n$ only for $p=n-1$ there is no equality unless $i=n-1$.

Let $L_{k}(K)=L_{W_{n-k}}(K,-K)$.
The following result shows that the inequality given in Theorem \ref{Theorem} improves inequality (\ref{iZhang}):
\begin{proposition}
Let $K\in\K^n_0$. Then
$$
C_k(K,-K)\subseteq L_k(K)\subseteq nW_{n-k}(K)\Pi_{k-1}^*(K).
$$
\end{proposition}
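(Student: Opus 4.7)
The first inclusion $C_k(K,-K)\subseteq L_k(K)$ is immediate from Proposition \ref{UpperInclusion}, applied with $h=W_{n-k}$ and second body $L=-K$: each of the sets $(K+_{k,\theta}(-K))/(1-\theta^{1/k})$ lies in $L_k(K)$ for $\theta\in[0,1)$, and $L_k(K)$ will turn out to be closed, so one may pass to the limit $\theta\to 1^-$.

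For the second inclusion the plan is to compute the one-sided derivative $\frac{d^+}{dt}W_{n-k}(K\cap(tv+K))|_{t=0}$ exactly; in fact this will give the stronger equality $L_k(K)=nW_{n-k}(K)\Pi_{k-1}^*(K)$. The starting observation is the uniform first-order expansion
$$h_{K\cap(tv+K)}(u)=h_K(u)+t\min(\langle u,v\rangle,0)+o(t)\qquad(t\to 0^+,\ u\in S^{n-1}),$$
which holds because for $\langle u,v\rangle>0$ the supporting hyperplane of $K\cap(tv+K)$ in direction $u$ is provided by $K$, for $\langle u,v\rangle<0$ by $tv+K$, and the tangential locus $\langle u,v\rangle=0$ contributes only an $O(t^2)$ discrepancy. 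Granting this expansion, the classical variational formula for querma\ss integrals (a consequence of the multilinearity of mixed volumes and the definition of $S_{k-1}(K,\cdot)$) gives $\frac{d}{dt}W_{n-k}(K_t)|_{t=0}=\frac{k}{n}\int f\,dS_{k-1}(K,\cdot)$ for any variation $h_{K_t}=h_K+tf+o(t)$ through convex bodies.

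Applying this formula with $f(u)=\min(\langle u,v\rangle,0)=-(-\langle u,v\rangle)_+$ and using the vanishing first-moment identity $\int\langle u,v\rangle\,dS_{k-1}(K,u)=0$ to symmetrize, we obtain
$$-\frac{d^+}{dt}W_{n-k}(K\cap(tv+K))\bigg|_{t=0}=\frac{k}{2n}\int_{S^{n-1}}|\langle u,v\rangle|\,dS_{k-1}(K,u)=\frac{k}{n}\|v\|_{\Pi_{k-1}^*(K)}.$$
Substituting this exact formula into the definition of $L_k(K)$ turns the defining inequality $-|x|\frac{d^+}{dt}\cdots\leq kW_{n-k}(K)$ into $\|x\|_{\Pi_{k-1}^*(K)}\leq nW_{n-k}(K)$, which is exactly $x\in nW_{n-k}(K)\Pi_{k-1}^*(K)$, and so yields the second inclusion with equality. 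The main obstacle is establishing the first-order expansion of $h_{K\cap(tv+K)}$ uniformly in $u$ (or at least in the $L^1(dS_{k-1}(K,\cdot))$ sense needed to differentiate under the integral sign); once this is in hand, the rest of the argument is a direct application of standard Brunn-Minkowski tools.
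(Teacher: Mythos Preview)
Your argument has a genuine gap: the first-order expansion
\[
h_{K\cap(tv+K)}(u)=h_K(u)+t\min(\langle u,v\rangle,0)+o(t)
\]
is simply false for non-smooth bodies, not merely hard to justify. Take $n=2$, $K=[0,1]^2$, $v=(1,1)/\sqrt2$, so that $K\cap(tv+K)=[t/\sqrt2,1]^2$. For $u=(1,-1)/\sqrt2$ one has $\langle u,v\rangle=0$, so your formula predicts $h_{K\cap(tv+K)}(u)=h_K(u)+o(t)$, whereas in fact $h_{K\cap(tv+K)}(u)=1/\sqrt2-t/2$, an error of exact order $t$ on an open set of directions. Consequently your claimed \emph{equality} $L_k(K)=nW_{n-k}(K)\Pi_{k-1}^*(K)$ is wrong: in this same example $W_1(K\cap(tv+K))=2(1-t/\sqrt2)$ gives $-D(v)=\sqrt2$, while $\tfrac{k}{n}\|v\|_{\Pi_0^*(K)}=1$, so the containment is strict. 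The heuristic ``for $\langle u,v\rangle>0$ the supporting hyperplane is provided by $K$'' fails precisely when the face $F(K,u)$ is a single exposed vertex and $v$ is not an interior direction of the normal cone there.

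What \emph{is} always true is the one-sided bound $h_{K\cap(tv+K)}(u)\le h_K(u)+t\min(\langle u,v\rangle,0)$, and this alone suffices for the stated inclusion if you add one more ingredient: from it one gets
\[
V(K_t,K[k-1],B_2^n[n-k])\le W_{n-k}(K)-\tfrac{t}{n}\|v\|_{\Pi_{k-1}^*(K)},
\]
and then the generalized Alexandrov--Fenchel inequality $V(K_t,K[k-1],B_2^n[n-k])^k\ge W_{n-k}(K_t)\,W_{n-k}(K)^{k-1}$ upgrades this to $W_{n-k}(K_t)\le W_{n-k}(K)-\tfrac{kt}{n}\|v\|_{\Pi_{k-1}^*(K)}+o(t)$, which is exactly $-D(v)\ge\tfrac{k}{n}\|v\|_{\Pi_{k-1}^*(K)}$. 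The paper takes a different route: it writes $W_{n-k}$ via Kubota's formula as an average of $|P_E(\cdot)|$ over $G_{n,k}$, uses the elementary inclusion $P_E(K\cap(tv+K))\subseteq P_E(K)\cap(tP_Ev+P_E(K))$ to reduce to the known $k=n$ derivative on each $E$, and then performs a somewhat lengthy Grassmannian integration to identify the result with $\tfrac{k}{n}\|v\|_{\Pi_{k-1}^*(K)}$. Either corrected approach yields only the inclusion, which is all the proposition asserts.
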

\begin{proof}
The first inclusion has been shown in Section \ref{ConvolutionBodies}. For the second one, let $v\in S^{n-1}$. Then
\begin{eqnarray*}
&&\frac{d^+}{dt}\left.W_{n-k}\left(K\cap \left(tv+K\right)\right)\right|_{t=0}=\cr
&=&\frac{|B_2^n|}{|B_2^{k}|}\lim_{t\to0^+}\int_{G_{n,k}}\frac{|P_{E}(K\cap(tv+K))|-|P_{E}(K)|}{t}d\nu_{n,k}(E)\cr
&=&\frac{|B_2^n|}{|B_2^{k}|}\int_{G_{n,k}}\lim_{t\to0^+}\frac{|P_{E}(K\cap(tv+K))|-|P_{E}(K)|}{t}d\nu_{n,k}(E)\cr
&\leq&\frac{|B_2^n|}{|B_2^{k}|}\int_{G_{n,k}}\lim_{t\to0^+}\frac{|P_{E}(K)\cap(tP_Ev+P_E(K))|-|P_{E}(K)|}{t}d\nu_{n,k}(E)\cr
&=&-\frac{|B_2^n|}{|B_2^{k}|}\int_{G_{n,k}}|P_{E}v||P_{(P_{E}v)^\perp\cap E}(K)|d\nu_{n,k}(E).
\end{eqnarray*}

For any $k$-dimensional subspace $E$, if $u_1,\dots,u_{n-k}$ is an orthonormal basis of $E^\perp$, we have that

\begin{eqnarray*}|P_{E}v|&=&\sqrt{1-\sum_{i=1}^{n-k}\langle v,u_i\rangle^2}\cr
&=&\sqrt{1-\sum_{i=1}^{n-k}|P_{\textrm{span}\{u_1,\dots,u_{i-1}\}^\perp}v|^2\langle\frac{P_{\textrm{span}\{u_1,\dots,u_{i-1}\}^\perp}v}{|P_{\textrm{span}\{u_1,\dots,u_{i-1}\}^\perp}v|},u_i\rangle^2}
\end{eqnarray*}
and
\begin{eqnarray*} (P_{E}v)^\perp\cap E&=&\textrm{span}\{v,u_1,\dots,u_{n-k}\}^\perp=\textrm{span}\{v,\xi_1,\dots,\xi_{n-k}\}^\perp,
\end{eqnarray*}
where $\xi_1=P_{v^\perp}u_1$ and $\xi_i=P_{\textrm{span}\{v,\xi_1,\dots,\xi_{i-1}\}^\perp} u_i$ ($i>1$).

By uniqueness of the Haar probability measure on $G_{n,k}$, the above integral equals
$$
-\frac{|B_2^n|}{|B_2^{k}|}\int\int\dots\int g_v(u_1,\dots,u_{n-k})d\sigma(u_{n-k})\dots d\sigma(u_1),
$$
where $u_1$ runs over $S^{n-1}$, $u_i$ runs over $S^{n-1}\cap\textrm{span}\{u_1,\dots,u_{i-1}\}^\perp$ ($i>1)$ and
\begin{eqnarray*}
g_v(u_1,\dots,u_{n-k})&=&\sqrt{1-\sum_{i=1}^n|P_{\textrm{span}\{u_1,\dots,u_{i-1}\}^\perp}v|^2\langle\frac{P_{\textrm{span}\{u_1,\dots,u_{i-1}\}^\perp}v}{|P_{\textrm{span}\{u_1,\dots,u_{i-1}\}^\perp}v|},u_i\rangle^2}\times\cr
&\times&|P_{\textrm{span}\{\xi_1,\dots,\xi_{n-k}\}^\perp}P_{v^\perp} (K)|.
\end{eqnarray*}

Now, using the slice integration formula on each one of the spheres, in the direction $\frac{P_{\textrm{span}\{u_1,\dots,u_{i_1}\}^\perp}v}{|P_{\textrm{span}\{u_1,\dots,u_{i_1}\}^\perp}v|}$, we obtain that the previous integral equals
\begin{eqnarray*}
&-&\frac{k}{n}\int_{-1}^1\dots\int_{-1}^1(1-x_1^2)^\frac{n-2}{2}(1-x_2^2)^\frac{n-3}{2}\dots(1-x_{n-k}^2)^\frac{k-1}{2}dx_{n-k}\dots dx_1\times\cr
&\times&\int\int\dots\int|P_{\textrm{span}\{\xi_1,\dots,\xi_{n-k}\}^\perp}P_v^\perp(K)|d\sigma(\xi_{n-k})\dots d\sigma(\xi_1),\cr
\end{eqnarray*}
where $\xi_1$ runs over $S^{n-1}\cap v^\perp$ and $\xi_i$ runs over $S^{n-1}\cap\textrm{span}\{v,\xi_1,\dots,\xi_{i-1}\}^\perp$. By uniqueness of the Haar measure in $G_{v^\perp, k-1}$ equals
\begin{eqnarray*}
&-&\frac{k}{n}\int_{-1}^1\dots\int_{-1}^1(1-x_1^2)^\frac{n-2}{2}(1-x_2^2)^\frac{n-3}{2}\dots(1-x_{n-k}^2)^\frac{k-1}{2}dx_{n-k}\dots dx_1\times\cr
&\times&\int_{G_{v^\perp,k-1}}|P_E P_{v^\perp}(K)|d\nu_{n-1,k-1}\cr
&=&-\frac{k|B_2^{k-1}|}{n|B_2^{n-1}|}\int_{-1}^1\dots\int_{-1}^1(1-x_1^2)^\frac{n-2}{2}(1-x_2^2)^\frac{n-3}{2}\dots(1-x_{n-k}^2)^\frac{k-1}{2}dx_{n-k}\dots dx_1\times\cr
&\times& W_{n-k}(P_{v^\perp}(K))\cr
&=&-\frac{k|B_2^{k-1}|}{n|B_2^{n-1}|}\frac{(\sqrt{\pi})^{n-k}\Gamma\left(\frac{k+1}{2}\right)}{\Gamma\left(\frac{n+1}{2}\right)}W_{n-k}(P_{v^\perp}(K))=-\frac{k}{n}\Vert v\Vert_{\Pi_{k-1}^*(K)}.
\end{eqnarray*}
Consequently
$$
L_k(K)\subseteq nW_{n-k}(K)\Pi_{k-1}^*(K).
$$
\end{proof}

%\begin{rmk}
%Notice that if $K=\textrm{conv}\{0,e_1,\dots,e_n\}$, then for any $v\in S^{n-1}$, the function
%\begin{eqnarray*}
%f_v(E)&=&\lim_{t\to0^+}\frac{|P_{E}(K\cap(tv+K))|-|P_{E}(K)|}{t}\cr
%&=&-\frac{k}{2}|P_{E}(K)|\left(\left|\sum_{i=1}^nv_i\right|+\sum_{i=1}^n|v_i|\right).
%\end{eqnarray*}
%is continuous on $G_{n,k}$ and is strictly greater than 0 for every $E$.

%On the other hand, the function
%\begin{eqnarray*}
%g_v(E)&=&\lim_{t\to0^+}\frac{|P_{E}(K)\cap(tP_{E}v+P_{\theta^\perp}(K))|-|P_{E}(K)|}{t}\cr
%&=&\left\{\begin{array}{ll}|P_{E}v||P_{\textrm{span}\{P_{E}v^\perp\cap E\}}(K)|& \textrm{if }\,v\notin E^\perp\cr0&\textrm{if }\,v\in E^\perp\end{array}\right.
%\end{eqnarray*}
%is also continuous. Since $f_v(E)\leq g_v(E)$ for every $E\in G_{n,k}$ and $f_v(E)<g_v(E)$ for any $E$ such that $v\in E^{\perp}$, there exists a subset in $G_{n,k}$ with positive measure in which $f_v(E)<g_v(E)$. Thus the inequality between the integrals above is strict and
%$$
%L_k(K)\subsetneq nW_{n-k}(K)\Pi_{k-1}^*(K).
%$$
%The same argument works for any other simplex.
%\end{rmk}

\section{Rogers-Sephard inequality and Zhang's inequality for $C_{n-1}(K)$}\label{Proof}

In this section we prove Theorem \ref{Theorem}. It is a consequence of Theorem \ref{BoundVolumeLimiting} and the following:
\begin{thm}
Let $K\in\K^n_0$, $h$  a function like in Definition \ref{Definition} and
$$
C_h(K,L):=\lim_{\theta\to1^-}\frac{K+_{h,\theta}L}{1-\theta^{\frac{1}{k}}}
$$
Then
$$
|C_h(K,L)|\geq {n+k \choose n}\int_{\R^n} \frac{h(K\cap (x-L))}{M_h(K,L)}dx\geq |K+L|,
$$
with equality when $K=-L$ is a simplex. If $h$ is like in Lemma \ref{simplex}, then there is equality if and only if $K=-L$ is a simplex.
\end{thm}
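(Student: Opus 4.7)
My plan is to introduce the normalized function $F\colon K+L\to[0,1]$ defined by
$$F(x)=\left(\frac{h(K\cap(x-L))}{M_h(K,L)}\right)^{1/k}.$$
After the WLOG normalization $M_h(K,L)=h(K\cap(-L))$, hypothesis $(iv)$ of Definition \ref{Definition} makes $F$ a non-negative concave function on $K+L$ with $F(0)=1=\max F$ and $F=0$ on $\partial(K+L)$. Since $\int h/M_h=\int F^k$, the stated chain becomes
$$|C_h(K,L)|\;\geq\;\binom{n+k}{n}\int_{K+L}F(x)^k\,dx\;\geq\;|K+L|.$$

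For the right-hand inequality I will invoke a classical Berwald-type estimate: for any non-negative concave $F$ on a convex body $C\subset\R^n$,
$$\int_C F^k\,dx\;\geq\;\frac{|C|(\max F)^k}{\binom{n+k}{n}},$$
with equality exactly when $F$ is affine on rays from its peak. This follows from the layer cake $\int F^k=\int_0^{\max F}kt^{k-1}|\{F>t\}|\,dt$, the concavity of $t\mapsto|\{F>t\}|^{1/n}$ obtained from Brunn--Minkowski applied to the convex superlevel sets, and the identity $\int_0^1kt^{k-1}(1-t)^n\,dt=\binom{n+k}{n}^{-1}$.

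For the left-hand inequality I first identify $C_h(K,L)$ explicitly. The concavity of $F$ with $F(0)=1$ makes $t\mapsto(1-F(tu))/t$ non-decreasing in $t>0$, so the condition $y\in(K+_{h,\theta}L)/(1-\theta^{1/k})$ reads $(1-F((1-\theta^{1/k})y))/(1-\theta^{1/k})\leq 1$, and Corollary \ref{increasingintheta} together with $\theta\to 1^-$ yields
$$C_h(K,L)=\{y\in\R^n:d(y)\leq 1\},\qquad d(y):=\lim_{t\to 0^+}\frac{1-F(ty)}{t},$$
with $d$ positively $1$-homogeneous, hence $|C_h|=\tfrac{1}{n}\int_{S^{n-1}}d(u)^{-n}\,d\sigma(u)$. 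The tangent-line bound from concavity at $0$ gives $F(tu)\leq 1-td(u)$ along every ray, and since $F\geq 0$ this forces the radial function of $K+L$ in direction $u$ to be at most $1/d(u)$. Writing $\int F^k\,dx$ in polar coordinates and applying this ray-by-ray bound yields
$$\int_{\R^n}F^k\,dx\leq\int_{S^{n-1}}\!\!\int_0^{1/d(u)}(1-td(u))^k t^{n-1}\,dt\,d\sigma(u)=\frac{k!(n-1)!}{(n+k)!}\int_{S^{n-1}}d(u)^{-n}\,d\sigma(u)=\frac{|C_h|}{\binom{n+k}{n}},$$
which after multiplying by $M_h(K,L)$ is exactly the left-hand inequality.

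For the equality analysis, the two bounds are simultaneously tight precisely when $F(x)=(1-d(x))_+$ on $K+L$ with $K+L=C_h(K,L)$, i.e.\ $F$ is an affine cone with apex at the origin. Under this condition $\{F\geq s\}=(1-s)C_h$ for every $s\in[0,1]$, so $(K+_{h,\theta}L)/(1-\theta^{1/k})$ is independent of $\theta$; the hypothesis of Lemma \ref{simplex} then forces $K=-L$ to be a simplex. Conversely, for $K=-L$ a simplex the formula $K\cap(x+K)=a(x)+\lambda(x)K$ already used in Section \ref{ConvolutionBodies} shows that $F=\lambda$ is exactly such an affine cone from $0$, so both inequalities become equalities. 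The main obstacle I anticipate is the rigorous identification of $C_h$ as $\{d\leq 1\}$ from the monotone family of normalized convolution bodies (confirming that the closure of the union is exactly this sublevel set and that $d$ behaves as the gauge of a genuine star body); once that is in place the remaining work is the Berwald lemma and the single polar-coordinate computation above.
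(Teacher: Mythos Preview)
Your proof is correct and rests on the same mechanism as the paper's: the concavity of $F=(h/M_h)^{1/k}$ (hypothesis~(iv)) gives the sandwich
\[
(1-\theta^{1/k})(K+L)\ \subseteq\ K+_{h,\theta}L\ \subseteq\ (1-\theta^{1/k})\,C_h(K,L),
\]
and one then integrates over $\theta\in[0,1]$ using the layer-cake/Fubini identity $\int_0^1|K+_{h,\theta}L|\,d\theta=\int_{\R^n}h(K\cap(x-L))/M_h\,dx$ together with $\int_0^1(1-\theta^{1/k})^n\,d\theta=\binom{n+k}{n}^{-1}$. Your Berwald lemma for the right-hand inequality is exactly this computation once one notices that the superlevel sets of $F$ are the bodies $K+_{h,\theta}L$.

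The one substantive difference is on the left-hand inequality. The paper never identifies $C_h$; it simply uses Corollary~\ref{increasingintheta} (monotonicity of the rescaled family) to get $|K+_{h,\theta}L|\leq(1-\theta^{1/k})^n|C_h|$ directly, and integrates. You instead work to show $C_h=\{d\leq 1\}$ (which is precisely the body $L_h(K,L)$ of Proposition~\ref{UpperInclusion}) and then run a pointwise tangent-line bound in polar coordinates. This is correct --- the increasing family has union $\{d<1\}$ by the monotonicity of $s\mapsto(1-F(sy))/s$, so the volumes agree --- but it is extra work: the ``main obstacle'' you anticipate is avoided entirely in the paper's argument, since the containment $(K+_{h,\theta}L)/(1-\theta^{1/k})\subseteq C_h$ is immediate from the very definition of $C_h$ as the limit of an increasing family. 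Your route has the modest payoff of making the equality analysis transparent (both bounds tight iff $F$ is an affine cone over $C_h$), which the paper handles by invoking Lemma~\ref{simplex} and the explicit simplex computations of Section~\ref{ConvolutionBodies}.
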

\begin{proof}
By Proposition \ref{UpperInclusion}, for any $\theta\in [0,1)$
$$|C_h(K,L)|(1-\theta^\frac{1}{k})^n\geq |K+_{h,\theta} L|\geq|K+L|(1-\theta^\frac{1}{k})^n.$$
Thus
$$|C_h(K,L)|\int_0^1(1-\theta^\frac{1}{k})^nd\theta\geq \int_0^1|K+_{h,\theta} L|d\theta\geq|K+L|\int_0^1(1-\theta^\frac{1}{k})^nd\theta.$$

Since
\begin{eqnarray*}
\int_0^1|K+_{h,\theta} L|d\theta&=&\int_0^1\int_{\R^n}\chi_{h(K\cap(y-L))\geq\theta M_h(K,L)}(x)dxd\theta\cr
&=&\int_{\R^n} \frac{h(K\cap (x-L))}{M_h(K,L)}dx
\end{eqnarray*}
we obtain the result. By the Lemmas in the previous Section, all the inequalities are equalities when $K=-L$ is a simplex and if $h$ is like in Lemma \ref{simplex}, then there is equality if and only if $K=-L$ is a simplex.
\end{proof}

%\begin{rmk}
%For any convex body $K$
%$$|K|^i|\Pi_i^*(K)|\geq\frac{|K|^{i}}{n^nW_{n-i-1}(K)^n}{n+i+1 \choose n}\int_{\R^n} \frac{W_{n-i-1}(K\cap (x+K))}{W_{n-i-1}(K)}dx,$$ but there can't be equality for any $K$.
%\end{rmk}
Taking $h(K)=W_{n-k}(K)$, we obtain the following Theorem, which in particular gives Theorem \ref{Theorem}, since the inequality we obtain computing the integral $\int_{\R^n} \frac{h(K\cap (x-L))}{M_h(K,L)}dx$ is an equality when $h(K)=W_1(K)$:
\begin{thm}\label{BoundVolumeLimiting}
Let $K\in\K^n_0$. Then, for any $1\leq k\leq n$
$$
|C_k(K,L)|\geq {n+k \choose n}\frac{|K|W_{n-k}(L)+|L|W_{n-k}(K)}{W_{n-k}(K\cap (-L))}.
$$
If $L=-K$ we can slightly improve this to
$$
|C_k(K,-K)|\geq{2n \choose n}{2n\choose n-k}^{-1}\left(2{n\choose k}+2^{n-k}-2\right)|K|.
$$
When $k=n-1$ these inequalities are sharp and we have equality if and only if $K=-L$ is a simplex.
\end{thm}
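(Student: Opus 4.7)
The plan is to apply the theorem preceding this one with $h=W_{n-k}$, which gives
$$|C_k(K,L)|\geq \binom{n+k}{n}\frac{1}{W_{n-k}(K\cap(-L))}\int_{\R^n}W_{n-k}(K\cap(x-L))\,dx,$$
so the problem reduces to showing $\int_{\R^n}W_{n-k}(K\cap(x-L))\,dx\geq|K|W_{n-k}(L)+|L|W_{n-k}(K)$. By Kubota's formula and Fubini, it suffices to bound $\int_{\R^n}|P_E(K\cap(x-L))|\,dx$ for each $E\in G_{n,k}$. Writing $x=y+z$ with $y\in E$, $z\in E^\perp$, and introducing the fibers $K_e:=\{f\in E^\perp:e+f\in K\}$ (and analogously for $L$), one verifies that $e\in P_E(K\cap(y+z-L))$ if and only if $z\in K_e+L_{y-e}$, so after the change of variable $y'=y-e$,
$$\int_{\R^n}|P_E(K\cap(x-L))|\,dx=\int_{P_E(K)}\int_{P_E(L)}|K_e+L_{y'}|\,dy'\,de.$$
Since $n-k\geq 1$, Brunn-Minkowski in $E^\perp$ together with the elementary inequality $(a+b)^{n-k}\geq a^{n-k}+b^{n-k}$ yields $|K_e+L_{y'}|\geq|K_e|+|L_{y'}|$; integrating by Fubini produces $|P_E(L)||K|+|P_E(K)||L|$, and averaging over $G_{n,k}$ against the Kubota prefactor $|B_2^n|/|B_2^k|$ recovers the desired inequality.

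For the improvement when $L=-K$, the identity $(-K)_{y'}=-K_{-y'}$ turns the inner double integral (after substituting $w=-y'$) into $\int_{P_E(K)}\int_{P_E(K)}|K_e-K_w|\,dw\,de$. Here I would keep the full binomial expansion of the Brunn-Minkowski lower bound,
$$|K_e-K_w|\geq\sum_{j=0}^{n-k}\binom{n-k}{j}|K_e|^{j/(n-k)}|K_w|^{(n-k-j)/(n-k)}.$$
The extreme terms $j\in\{0,n-k\}$ reproduce the $2|K||P_E(K)|$ contribution from the first paragraph, while the middle terms $1\leq j\leq n-k-1$ should contribute the extra factor $2^{n-k}-2=\sum_{j=1}^{n-k-1}\binom{n-k}{j}$ after bounding the moments $\phi_j:=\int_{P_E(K)}|K_v|^{j/(n-k)}\,dv$ via a Berwald-type inequality for the concave function $v\mapsto|K_v|^{1/(n-k)}$ on $P_E(K)$, combined with the normalization $\int|K_v|\,dv=|K|$. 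Summing and applying Kubota should then yield the improved constant $\binom{2n}{n}\binom{2n}{n-k}^{-1}(2\binom{n}{k}+2^{n-k}-2)$.

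Sharpness at $k=n-1$ is immediate because $n-k=1$ makes the Brunn-Minkowski step the identity $|K_e+L_{y'}|=|K_e|+|L_{y'}|$ for line segments, so the integral estimate is an equality; equality in the theorem then forces equality in the chain
$$|C_{n-1}(K,L)|\geq\binom{2n-1}{n}\frac{|K|W_1(L)+|L|W_1(K)}{W_1(K\cap(-L))}\geq|K+L|,$$
which via Corollary \ref{increasingintheta} and Lemma \ref{simplex} requires $K=-L$ to be a simplex. The main obstacle is the $L=-K$ improvement: matching the precise combinatorial constant $2\binom{n}{k}+2^{n-k}-2$ requires a careful use of a concavity-based moment inequality to estimate $\phi_j$ and a delicate accounting of the binomial sums, in contrast to the general inequality, which is a routine Brunn-Minkowski/Kubota/Fubini computation.
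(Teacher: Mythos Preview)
Your treatment of the general inequality is essentially the paper's proof: the paper starts from Crofton's formula and then parametrizes affine $(n-k)$-planes by $E_0\in G_{n,n-k}$ and $z\in E_0^\perp$, arriving at exactly the double integral
\[
\int_{P_{E_0^\perp}(K)}\int_{P_{E_0^\perp}(L)}\bigl|(K\cap(z+E_0))+(L\cap(y+E_0))\bigr|\,dy\,dz
\]
that your Kubota/fiber computation produces (with $E_0=E^\perp$ and $K_e$ a translate of $K\cap(e+E_0)$). The Brunn--Minkowski step, the elementary bound $(a+b)^{n-k}\geq a^{n-k}+b^{n-k}$, and the sharpness discussion at $k=n-1$ are identical.

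For the $L=-K$ improvement your proposal diverges from the paper, and here your plan does not deliver the stated constant. The paper does \emph{not} invoke a Berwald-type moment inequality. Instead, for each middle index $1\leq i\leq n-k-1$ it uses the pointwise bound
\[
|K_z|^{\,i/(n-k)}\,|K_w|^{\,(n-k-i)/(n-k)}\ \geq\ \frac{|K_z|\,|K_w|}{\max_{x}|K_x|}
\]
(valid because the exponents sum to $1$ and each base is at most $\max_x|K_x|$), integrates in $z,w$ to obtain $|K|^2/\max_x|K_x|$, and then applies the Rogers--Shephard section--projection inequality $\max_x|K_x|\cdot|P_E(K)|\leq\binom{n}{k}|K|$ to get that every middle term is bounded below by $\binom{n}{k}^{-1}|K|\,|P_E(K)|$. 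Because this lower bound is \emph{uniform in $i$}, summing the binomial coefficients produces exactly the factor $2^{n-k}-2$, and the identity $\binom{n+k}{n}\binom{n}{k}^{-1}=\binom{2n}{n}\binom{2n}{n-k}^{-1}$ yields the stated constant. Your Berwald route instead gives $j$-dependent bounds $\phi_j\phi_{n-k-j}\geq\binom{n}{k}\binom{k+j}{k}^{-1}\binom{n-j}{k}^{-1}|K|\,|P_E(K)|$; already at $n=3$, $k=1$ this gives $\tfrac{3}{2}|K|\,|P_E(K)|$ for the middle contribution versus the paper's $\tfrac{2}{3}|K|\,|P_E(K)|$, so while Berwald may actually yield a \emph{stronger} inequality, it does not reproduce the constant you set out to match, and the claim that it ``should contribute the extra factor $2^{n-k}-2$'' is unjustified as written.
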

\begin{proof}
If we take $h(K)=W_{n-k}(K)$ we have, by Crofton's intersection formula (see \cite{SCH}, page 235) that
$$
W_{n-k}(K)=C_{n,k}\mu_{n,n-k}\{E\in\mathbb A_{n,n-k}\,:\,K\cap E\neq\emptyset\},
$$
where $C_{n,k}$ is a constant depending only on $n$ and $k$ and $d\mu_{n,n-k}$ is the Haar measure on the set of affine $(n-k)$-dimensional subspaces of $\R^n$, $\mathbb A_{n,n-k}$. Thus
\begin{eqnarray*}
\int_{\R^n} \frac{h(K\cap (x-L))}{M_h(K,L)}dx&=&\frac{\int_{\R^n}\int_{\mathbb A_{n,n-k}}\chi_{\{K\cap(x-L)\cap E\neq\emptyset\}}(E)d\mu_{n,n-k}(E)dx}{\mu_{n,n-k}\{E\in\mathbb A_{n,n-k}\,:\,K\cap(-L)\cap E\neq\emptyset\}}\cr
&=&\frac{\int_{\{E\in\mathbb A_{n,n-k}\,:\,K\cap E\neq\emptyset\}}|(K\cap E)+L|d\mu_{n,n-k}(E)}{\mu_{n,n-k}\{E\in\mathbb A_{n,n-k}\,:\,K\cap (-L)\cap E\neq\emptyset\}}\cr
\end{eqnarray*}
For every $E\in\mathbb A_{n,n-k}$, calling $E_0$ the linear subspace parallel to $E$,
$$
|(K\cap E)+L|=\int_{P_{E_0^\perp}L}|(K\cap E)+(L\cap(y+E_0))|dy.
$$
Thus, since for any subspace $E_0\in G_{n,k}$, ${n\choose k}\max_{x\in E_0^\perp}|K\cap(x+E_o)||P_{E_0^\perp}(K)|\leq|K|$ (see \cite{Pi}, Lemma 8.8 for a proof in the symmetric case, which also works in the non-symmetric case).
\begin{eqnarray*}
&&\int_{\{E\in\mathbb A_{n,n-k}\,:\,K\cap E\neq\emptyset\}}|(K\cap E)+L|d\mu_{n,n-k}(E)\cr&=&\int_{G_{n,n-k}}\int_{P_{E_0^\perp}(K)}\int_{P_{E_0^\perp}(L)}|(K\cap (z+E_0))+(L\cap(y+E_0))|dydzd\nu_{n,n-k}(E_0)\cr
&\geq&\int_{G_{n,n-k}}\int_{P_{E_0^\perp}(K)}\int_{P_{E_0^\perp}(L)}\left(|(K\cap (z+E_0))|^\frac{1}{n-k}+|(L\cap(y+E_0))|^\frac{1}{n-k}\right)^{n-k}\times\cr
&\times&dydzd\nu_{n,n-k}(E_0)\cr
%&\geq&\int_{G_{n,n-k}}\int_{P_{E_0^\perp(K)}}\int_{P_{E_0^\perp}(L)}\sum_{i=0}^{n-k}{{n-k}\choose i}|(K\cap %(z+E_0))|^{\frac{i}{n-k}}|(L\cap(y+E_0))|^{\frac{n-k-i}{n-k}}dydzd\nu_{n,n-k}(E_0)\cr
%&=&|K|\int_{G_{n,n-k}}|P_{E_0^\perp}(L)|d\nu_{n,n-k}+|L|\int_{G_{n,n-k}}|P_{E_0^\perp}(K)|d\nu_{n,n-k}\cr
%&+&\sum_{i=1}^{n-k-1}{n-k\choose i}\int_{G_{n,n-k}}\int_{P_{E_0^\perp(K)}}\int_{P_{E_0^\perp}(L)}\frac{|(K\cap (z+E_0))|}{|(K\cap %(z+E_0))|^\frac{n-k-i}{n-k}}\frac{|(L\cap(y+E_0))|}{|(L\cap(y+E_0))|^\frac{i}{k}}dydzd\nu_{n,n-k}\cr
&\geq&|K|\int_{G_{n,n-k}}|P_{E_0^\perp}(L)|d\nu_{n,n-k}+|L|\int_{G_{n,n-k}}|P_{E_0^\perp}(K)|d\nu_{n,n-k},\cr
%&+&\sum_{i=1}^{n-k-1}{n-k\choose i}\int_{G_{n,n-k}}\int_{P_{E_0^\perp(K)}}\int_{P_{E_0^\perp}(L)}\frac{|(K\cap (z+E_0))||(L\cap (y+E_0))|}{\max_{x\in P_E(K)}|(K\cap (x+E_0))|^{\frac{n-k-i}{n-k}}\max_{x\in P_E(L)}|(L\cap (x+E_0))|^{\frac{i}{n-k}}}dydzd\nu_{n,n-k}\cr
%&\geq&|K|\int_{G_{n,n-k}}|P_{E_0^\perp}(L)|d\nu_{n,n-k}+|L|\int_{G_{n,n-k}}|P_{E_0^\perp}(K)|d\nu_{n,n-k}\cr
%&+&|K||L|\int_{G_{n,n-k}}\left(\left(\frac{1}{\max_{x\in P_E(K)}|(K\cap (x+E_0))|}+\frac{1}{\max_{x\in P_E(L)}|(L\cap (x+E_0))|}\right)^{n-k}-\frac{1}{\max_{x\in P_E(K)}|(K\cap (x+E_0))|}-\frac{1}{\max_{x\in P_E(L)}|(L\cap (x+E_0))|}\right)d\nu_{n,n-k}\cr
%&\geq&2|K|\int_{G_{n,n-k}}|P_{E_0^\perp}(K)|d\nu_{n,n-k}\cr
%&+&(2^{n-k}-2){n\choose k}^{-1}|K|\int_{G_{n,n-k}}|P_{E_0^\perp(K)|}d\nu_{n,n-k}\cr
%&=&\left(2{n\choose k}+2^{n-k}-2\right){n\choose k}^{-1}|K|\frac{|B_2^k|}{|B_2^n|}W_{n-k}(K).
\end{eqnarray*}
where the first inequality follows from the $(n-k)$-dimensional version of Brunn-Minkowski inequality and the second one follows
from the fact that $(a+b)^{n-k}\geq a^{n-k}+b^{n-k}$ for any $a,b\geq 0$.

%Thus, if $k=n-1$ {\it i.e.}, $h(K)=W_1(K)$, then
%$$
%|(K\cap E)-K|=\int_{P_{E_0^\perp}(-K)}(|(K\cap E)|+|(-K)\cap(y+E_0)|)dy=|P_{E_0^\perp}(K)||K\cap E|+|K|
%$$
%and
%\begin{eqnarray*}
%&&\int_{\{E\in\mathbb A_{n,n-k}\,:\,K\cap E\neq\emptyset\}}|(K\cap E)-K|d\mu_{n,n-k}(E)\cr&=&\int_{G_{n,n-1}}|P_{F}(K)|d\nu_{n,n-1}(F)|K| +|K|\mu_{n,1}\{E\in\mathbb A_{n,1}\,:\,K\cap E\neq\emptyset\}\cr
%&=&|K|\mu_{n,1}\{E\in\mathbb A_{n,1}\,:\,K\cap E\neq\emptyset\}+|K|\frac{|B_2^{n-1}|}{|B_2^n|}W_1(K).
%\end{eqnarray*}
Since
\begin{eqnarray*}
\mu_{n,n-k}\{E\in\mathbb A_{n,1}\,:\,K\cap (-L)\cap E\neq\emptyset\}&=&\int_{G_{n,n-k}}|P_{E_0}^\perp(K\cap (-L))|d\nu_{n,n-n}(E_0)\cr
&=&\frac{|B_2^{k}|}{|B_2^n|}W_{n-k}(K\cap (-L))
\end{eqnarray*}
we have
$$
\int_{\R^n} \frac{W_{n-k}(K\cap (x-L))}{W_{n-k}(K\cap(-L))}dx\geq\frac{|K|W_{n-k}(L)+|L|W_{n-k}(K)}{W_{n-k}(K\cap (-L))}
$$
Thus
\begin{eqnarray*}
|C_k(K,L)|&\geq&{n+k \choose n}\frac{|K|W_{n-k}(L)+|L|W_{n-k}(K)}{W_{n-k}(K\cap L)}.
\end{eqnarray*}
Notice that if $k=n-1$ the above inequalities become equalities.
If $L=-K$, we have
\begin{eqnarray*}
&&\int_{\{E\in\mathbb A_{n,n-k}\,:\,K\cap E\neq\emptyset\}}|(K\cap E)-K|d\mu_{n,n-k}(E)\cr&=&\int_{G_{n,n-k}}\int_{P_{E_0^\perp}(K)}\int_{P_{E_0^\perp}(-K)}|(K\cap (z+E_0))+((-K)\cap(y+E_0))|\times\cr
&\times&dydzd\nu_{n,n-k}(E_0)\cr
&\geq&\int_{G_{n,n-k}}\int_{P_{E_0^\perp}(K)}\int_{P_{E_0^\perp}(-K)}\left(|K\cap (z+E_0)|^\frac{1}{n-k}+|(-K)\cap(y+E_0)|^\frac{1}{n-k}\right)^{n-k}\cr
&\times&dydzd\nu_{n,n-k}(E_0)\cr
&\geq&\int_{G_{n,n-k}}\int_{P_{E_0^\perp(K)}}\int_{P_{E_0^\perp}(-K)}\sum_{i=0}^{n-k}{{n-k}\choose i}|K\cap (z+E_0)|^{\frac{i}{n-k}}\times\cr
&\times&|(-K)\cap(y+E_0)|^{\frac{n-k-i}{n-k}}dydzd\nu_{n,n-k}(E_0)\cr
&=&2|K|\int_{G_{n,n-k}}|P_{E_0^\perp}(K)|d\nu_{n,n-k}\cr
&+&\sum_{i=1}^{n-k-1}{n-k\choose i}\int_{G_{n,n-k}}\int_{P_{E_0^\perp(K)}}\int_{P_{E_0^\perp}(-K)}\frac{|K\cap (z+E_0)|}{|K\cap (z+E_0)|^\frac{n-k-i}{n-k}}\times\cr
&\times&\frac{|(-K)\cap(y+E_0)|}{|(-K)\cap(y+E_0)|^\frac{i}{k}}dydzd\nu_{n,n-k}\cr
&\geq&2|K|\int_{G_{n,n-k}}|P_{E_0^\perp}(K)|d\nu_{n,n-k}\cr
&+&\sum_{i=1}^{n-k-1}{n-k\choose i}\int_{G_{n,n-k}}\int_{P_{E_0^\perp(K)}}\int_{P_{E_0^\perp}(-K)}|K\cap (z+E_0)|\times\cr
&\times&\frac{|(-K)\cap (y+E_0)|}{\max_{x\in P_E(K)}|K\cap (x+E_0)|}dydzd\nu_{n,n-k}\cr
&\geq&2|K|\int_{G_{n,n-k}}|P_{E_0^\perp}(K)|d\nu_{n,n-k}\cr
&+&(2^{n-k}-2)\int_{G_{n,n-k}}\frac{|K|^2}{\max_{x\in P_E(K)}|K\cap (x+E_0)|}d\nu_{n,n-k}\cr
&\geq&2|K|\int_{G_{n,n-k}}|P_{E_0^\perp}(K)|d\nu_{n,n-k}\cr
&+&(2^{n-k}-2){n\choose k}^{-1}|K|\int_{G_{n,n-k}}|P_{E_0^\perp(K)}|d\nu_{n,n-k}\cr
&=&\left(2{n\choose k}+2^{n-k}-2\right){n\choose k}^{-1}|K|\frac{|B_2^k|}{|B_2^n|}W_{n-k}(K).
\end{eqnarray*}
and then
$$
\int_{\R^n} \frac{W_{n-k}(K\cap (x+K))}{W_{n-k}(K)}dx\geq\left(2{n\choose k}+2^{n-k}-2\right){n\choose k}^{-1}|K|.
$$
Thus
\begin{eqnarray*}
|C_k(K,-K)|&\geq&{n+k \choose n}{n\choose k}^{-1}\left(2{n\choose k}+2^{n-k}-2\right)|K|\cr
&=&{2n \choose n}{2n\choose n-k}^{-1}\left(2{n\choose k}+2^{n-k}-2\right)|K|.
\end{eqnarray*}
\end{proof}

\section{Sections of the difference body and the polar projection body}\label{SectionsProjDiff}

In the following proposition we use the inclusion relation we obtained for the $h,\theta$- convolution bodies (for $h$ being the volume of the projection onto a subspace) to give an estimate for the volume of the sections of the Minkowski sum of two convex bodies. In particular, taking $h$ the volume (which is the volume the projection onto $\R^n$) we can give a simpler proof of the upper bound in (\ref{SectionsDifferenceBody}) involving the $\frac{n}{k}$ term.

\begin{proposition}
Let $E\in G_{n,k}$ be a linear subspace and let $F\in G_{n,l}$ be a linear subspace such that $E\subseteq F$. Then, for any $K,L$ convex bodies we have
$$
|(K+L)\cap E|\leq{l+k\choose k}\int_{F\cap E^\perp}\frac{|P_F(K)\cap(x+E)||P_F(-L)\cap(x+E)|}{\max_{z\in\R^n}|P_F(K\cap(z-L)|}dx
$$
In particular, if $L=-K$ we obtain the following estimate for the volume of the sections of the difference body
\begin{eqnarray*}
|(K-K)\cap E|&\leq&{l+k\choose k}\inf_{F\in G_{n,l}, E\subseteq F}\max_{x\in F}|P_F(K)\cap(x+E)|
\end{eqnarray*}
\end{proposition}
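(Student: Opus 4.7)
The plan is to apply the general machinery of $h,\theta$-convolution bodies developed in Section~\ref{ConvolutionBodies} with the specific choice $h(M):=|P_F(M)|$, the $l$-dimensional Lebesgue measure of the orthogonal projection of $M$ onto $F$. This $h$ satisfies all four axioms of Definition~\ref{Definition} with homogeneity exponent~$l$: monotonicity and translation-invariance come straight from the analogous properties of the projection, $|P_F(\lambda M)|=\lambda^l|P_F(M)|$, and the Brunn--Minkowski-type inequality follows from the classical Brunn--Minkowski inequality in the $l$-dimensional space $F$ together with $P_F((1-\lambda)K+\lambda L)=(1-\lambda)P_F(K)+\lambda P_F(L)$.

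With this $h$ fixed, the argument proceeds in three steps. First, Corollary~\ref{increasingintheta} taken at $\theta_0=0$ (where $K+_{h,0}L=K+L$) yields $(1-\theta^{1/l})(K+L)\subseteq K+_{h,\theta}L$ for every $\theta\in[0,1)$; intersecting with the $k$-plane $E$ and taking $k$-dimensional volume gives $(1-\theta^{1/l})^k|(K+L)\cap E|\leq|(K+_{h,\theta}L)\cap E|$. Second, integrating over $\theta\in[0,1]$, the left side produces the beta integral
$$\int_0^1(1-\theta^{1/l})^k\,d\theta=\frac{l!\,k!}{(l+k)!}=\binom{l+k}{k}^{-1},$$
while on the right, exchanging the order of integration exactly as in the proof of the Theorem in Section~\ref{Proof} converts the $\theta$-integral of the indicator of $K+_{h,\theta}L$ into $|P_F(K\cap(x-L))|/M_h(K,L)$. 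This produces
$$|(K+L)\cap E|\leq\binom{l+k}{k}\int_E\frac{|P_F(K\cap(x-L))|}{M_h(K,L)}\,dx.$$
Third, since $E\subseteq F$, for any $x\in E$ one has $P_F(K\cap(x-L))\subseteq P_F(K)\cap(x-P_F(L))=P_F(K)\cap(x+P_F(-L))$; substituting and performing Fubini in $F$ along the orthogonal decomposition $F=E\oplus(F\cap E^\perp)$ (writing $y=y_1+y_2$ with $y_1\in E$, $y_2\in F\cap E^\perp$, so that $y+E=y_2+E$) rewrites the integral over $E$ as
$$\int_{F\cap E^\perp}|P_F(K)\cap(z+E)|\cdot|P_F(-L)\cap(z+E)|\,dz,$$
which is the main inequality.

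For the particular case $L=-K$ one has $P_F(-L)=P_F(K)$, and the denominator $\max_z|P_F(K\cap(z+K))|$ equals $|P_F(K)|$ (attained at $z=0$, with the reverse bound coming from $K\cap(z+K)\subseteq K$). Writing $M:=\max_{x\in F}|P_F(K)\cap(x+E)|$, I bound one factor of $|P_F(K)\cap(z+E)|^2$ by $M$ and integrate the other by Fubini to obtain $|P_F(K)|$, which cancels against the denominator and leaves $\binom{l+k}{k}M$. Taking the infimum over admissible $F$ finishes the second claim and, specializing to $F=\R^n$ (so $l=n$), recovers Rudelson's $(n/k)^k$-type bound after estimating $\binom{n+k}{k}\leq(2en/k)^k$.

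I expect the main obstacle to be the Fubini computation in step three: one must carefully exploit both $E\subseteq F$ and the orthogonal decomposition of $F$ to recognize the integral of $|P_F(K)\cap(x+P_F(-L))|$ over $x\in E$ as a product-of-slices integral over $F\cap E^\perp$. Measurability and degeneracy issues (when $K\cap(x-L)$ is empty or lower-dimensional, in which case $|P_F(\cdot)|=0$) have to be checked as well, but they reduce to routine approximation arguments.
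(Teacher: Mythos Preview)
Your proposal is correct and follows essentially the same route as the paper: the same choice $h=|P_F(\cdot)|$ with homogeneity exponent $l$, the same use of Corollary~\ref{increasingintheta} at $\theta_0=0$ followed by integration in $\theta$ to produce the factor $\binom{l+k}{k}^{-1}$, the same inclusion $P_F(K\cap(x-L))\subseteq P_F(K)\cap(x-P_F(L))$, and the same Fubini computation in $F=E\oplus(F\cap E^\perp)$ to reach the product-of-slices integral. The treatment of the case $L=-K$ (bounding one factor by the maximal slice and integrating the other to cancel $|P_F(K)|$) is also identical to the paper's.
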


\begin{proof}
Let $h(K)$=$P_F(K)$. By Corollary \ref{increasingintheta}, we have that
$$
(1-\theta^\frac{1}{l})^k((K+L)\cap E)\subseteq (K+_{h,\theta}L)\cap E.
$$
Thus, taking volumes and integrating in $[0,1]$ we obtain
$$
{k+l\choose k}^{-1}|(K+L)\cap E|\leq\int_0^1|(K+_{h,\theta}L)\cap E|d\theta.
$$
Now, since $E\subseteq F$,
\begin{eqnarray*}
\int_0^1|(K+_{h,\theta}L)\cap E|d\theta&=&\int_E\frac{|P_F(K\cap(x-L))|}{M_h(K,L)}dx\cr
&\leq&\int_E\frac{|P_F(K)\cap(x-P_F(L)))|}{M_h(K,L)}dx\cr
&=&\frac{1}{M_h(K,L)}\int_E\int_F\chi_{P_F(K)}(y)\chi_{x-P_F(L)}(y)dydx\cr
&=&\frac{1}{M_h(K,L)}\int_F\int_E\chi_{P_F(K)}(y)\chi_{y+P_F(L)}(x)dxdy\cr
&=&\frac{1}{M_h(K,L)}\int_F\chi_{P_F(K)}(y)|(y+P_F(L))\cap E|dy\cr
&=&\int_{F\cap E^\perp}\frac{|P_F(K)\cap(z+E)||(-P_F(L))\cap (z+E)|}{M_h(K,L)}dz\cr
\end{eqnarray*}
In particular, if $L=-K$
\begin{eqnarray*}
|(K-K)\cap E|&\leq&{l+k\choose k}\inf_{F\in G_{n,l}, E\subseteq F}\int_{F\cap E^\perp}\frac{|P_F(K)\cap(x+E)|^2}{|P_F(K)|}dx\cr
&\leq&{l+k\choose k}\inf_{F\in G_{n,l}, E\subseteq F}\max_{x\in F}|P_F(K)\cap(x+E)|
\end{eqnarray*}
\end{proof}
\begin{rmk}
If we take $L=-K$, $F=\R^n$, we obtain
\begin{eqnarray*}
|(K-K)\cap E|&\leq&{n+k\choose k}\max_{x\in \R^n}|P_F(K)\cap(x+E)|\cr
&\leq&e^k\left(1+\frac{n}{k}\right)^k\max_{x\in \R^n}|K\cap(x+E)|
\end{eqnarray*}
and recover one of the two upper bounds proved in (\ref{SectionsDifferenceBody}) for the volume of the sections of the difference body.
\end{rmk}

In the same way we can give a lower bound for the volume of the sections of the polar projection body of a convex body:

\begin{proposition}
Let $E\in G_{n,k}$ be a linear subspace. Then, for any $K,L$ convex bodies we have
$$
|C_n(K,L)\cap E|\geq{n+k\choose n}\int_{E^\perp}\frac{|K\cap(x+E)||(-L)\cap(x+E)|}{M_0(K,L)}dx.
$$
When $L=-K$
$$
n^k|K|^k|\Pi^*(K)\cap E|\geq{n+k\choose n}\frac{|K|}{|P_{E^\perp}(K)|}.
$$
\end{proposition}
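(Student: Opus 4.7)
The plan is to mirror the proof of the previous proposition but with the containment going in the opposite direction. Letting $\theta\to 1^-$ in Corollary \ref{increasingintheta} (applied with $h(\cdot)=|\cdot|$, which has degree $n$) yields the inclusion $K+_\theta L\subseteq (1-\theta^{1/n})\,C_n(K,L)$ for every $\theta\in[0,1)$. Intersecting with the linear subspace $E\in G_{n,k}$ and taking $k$-dimensional Lebesgue measure gives $|(K+_\theta L)\cap E|\leq (1-\theta^{1/n})^k\,|C_n(K,L)\cap E|$. Integrating over $\theta\in(0,1)$ and using the Beta-function identity $\int_0^1(1-\theta^{1/n})^k\,d\theta=\binom{n+k}{n}^{-1}$ reduces the problem to lower-bounding $\int_0^1 |(K+_\theta L)\cap E|\,d\theta$.

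By exactly the same Fubini/layer-cake argument used in the previous proposition, one has $\int_0^1|(K+_\theta L)\cap E|\,d\theta=\int_E \frac{|K\cap(x-L)|}{M_0(K,L)}\,dx$. To rewrite this as an integral over $E^\perp$ I would expand $|K\cap(x-L)|=\int_{\R^n}\chi_K(y)\chi_L(x-y)\,dy$, swap the order of integration in $x$ and $y$, decompose $y=y_1+y_2$ along $E\oplus E^\perp$, and use that the affine slice $-y+E$ depends only on $y_2\in E^\perp$. This produces $\int_E|K\cap(x-L)|\,dx=\int_{E^\perp}|K\cap(y_2+E)|\cdot|L\cap(-y_2+E)|\,dy_2$, and since $E$ is a linear subspace, $|L\cap(-y_2+E)|=|(-L)\cap(y_2+E)|$, yielding exactly the integrand appearing in the statement.

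For the symmetric case $L=-K$, I substitute the known identities $C_n(K,-K)=n|K|\Pi^*(K)$ and $M_0(K,-K)=|K|$ into the first inequality, obtaining $n^k|K|^k|\Pi^*(K)\cap E|\geq\binom{n+k}{n}\int_{E^\perp}\frac{|K\cap(x+E)|^2}{|K|}\,dx$. The stated bound then follows from Cauchy--Schwarz on the support $P_{E^\perp}(K)$: since $\int_{E^\perp}|K\cap(x+E)|\,dx=|K|$ by Fubini, we get $\int_{E^\perp}|K\cap(x+E)|^2\,dx\geq |K|^2/|P_{E^\perp}(K)|$. The only step where one has to be careful is identifying the correct direction of the inclusion in Corollary \ref{increasingintheta}: the previous proposition used the inclusion $(1-\theta^{1/l})(K+L)\subseteq K+_{h,\theta}L$ to obtain an upper bound on $|(K+L)\cap E|$, whereas here we need its dual $K+_\theta L\subseteq(1-\theta^{1/n})C_n(K,L)$ to obtain a lower bound on $|C_n(K,L)\cap E|$; the rest of the argument is bookkeeping with Fubini and a one-line application of Cauchy--Schwarz.
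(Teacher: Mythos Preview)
Your proposal is correct and follows essentially the same route as the paper's proof: the inclusion from Corollary~\ref{increasingintheta}, integration in $\theta$ with the Beta identity, the layer-cake/Fubini computation turning $\int_E|K\cap(x-L)|\,dx$ into $\int_{E^\perp}|K\cap(y_2+E)|\,|(-L)\cap(y_2+E)|\,dy_2$, and finally Cauchy--Schwarz (equivalently, Jensen) on the support $P_{E^\perp}(K)$ for the case $L=-K$. The only cosmetic difference is that the paper arrives at $|(-L)\cap(y+E)|$ via the intermediate identity $|(y+L)\cap E|=|(-L)\cap(y+E)|$ before splitting $y$ along $E\oplus E^\perp$, whereas you decompose $y$ first; the content is identical.
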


\begin{proof}
By Corollary \ref{increasingintheta}, we have that
$$
(1-\theta^\frac{1}{n})C_n(K,L)\cap E\supseteq (K+_{n,\theta}L)\cap E.
$$
Taking volumes and integrating in $[0,1]$ we have
$$
{n+k\choose n}^{-1}|C_n(K,L)\cap E|\geq\int_0^1|(K+_{n,\theta}L)\cap E|d\theta.
$$
Now,
\begin{eqnarray*}
\int_0^1|(K+_{n,\theta}L)\cap E|d\theta&=&\int_E\int_0^1\chi_{\{x\in\R^n\,:\,|K\cap(x-L)|\geq\theta M_0(K,L)\}}(z)d\theta dz\cr
&=&\int_E\frac{|K\cap(z-L)|}{M_0(K,L)}dz=\frac{\int_E\int_{\R^n}\chi_K(y)\chi_{z-L}(y)dydz}{M_0(K,L)}\cr
&=&\frac{\int_E\int_{\R^n}\chi_K(y)\chi_{y+L}(z)dydz}{M_0(K,L)}\cr
&=&\frac{\int_{\R^n}\chi_K(y)|(y+L)\cap E|dy}{M_0(K,L)}\cr
&=&\frac{\int_{\R^n}\chi_K(y)|(-L)\cap (y+E)|dy}{M_0(K,L)}\cr
&=&\int_{E^\perp}\frac{|K\cap (x+E)||(-L)\cap (x+E)|dx}{M_0(K,L)}.\cr
\end{eqnarray*}

In particular, if $L=-K$, this integral equals
\begin{eqnarray*}
\frac{1}{|K|}\int_{E^\perp}|K\cap(x+E)|^2dx&=&\frac{|P_{E^\perp}(K)|}{|K|}\frac{1}{|P_{E^\perp}(K)|}\int_{E^\perp}|K\cap(x+E)|^2dx\cr
&\geq&\frac{|P_{E^\perp}(K)|}{|K|}\left(\frac{1}{|P_{E^\perp}(K)|}\int_{E^\perp}|K\cap(x+E)|dx\right)^2\cr
&=&\frac{|K|}{|P_{E^\perp}(K)|}.
\end{eqnarray*}
\end{proof}

\section{Acknowledgements}
D. Alonso Guti\'errez was partially supported by MICINN project MTM2010-16679,
MICINN-FEDER project MTM2009-10418, ``Programa de Ayudas a Grupos de
Excelencia de la Regi\'on de Murcia'', Fundaci\'on S\'eneca,
04540/GERM/06 and Institut Universitari de Matem\` atiques i Aplicacions de Castell\'o.

B. Gonz\'alez was partially supported by MINECO (Ministerio de Econom\'ia y Competitividad) and FEDER (Fondo Europeo de Desarrollo Regional) project MTM2012-34037, and Fundaci\'{o}n S\'{e}neca project 04540/GERM/06, Spain. This research is a result of the activity developed within the framework of the Programme in Support of Excellence Groups of the Regi\'{o}n de Murcia, Spain, by Fundaci\'{o}n S\'{e}neca, Regional Agency for
Science and Technology (Regional Plan for Science and Technology 2007-2010).

C. Hugo Jim\'enez was partially supported by the Spanish Ministry of Economy and Competitiveness, grant MTM2012-30748 and by Mexico's National Council for Sciences and Technology (CONACyT) postdoctoral grant 180486.

\end{document}